  \let\oldref\ref 
  \def\ref{\oldref*}}
\newtheorem{theorem}{Theorem}[section]
\newtheorem{lemma}[theorem]{Lemma}
\newtheorem{proposition}[theorem]{Proposition}
\newtheorem{corollary}[theorem]{Corollary}
\newenvironment{proof}[1][Proof]{\begin{trivlist}
\item[\hskip \labelsep {\bfseries #1}]}{\hfill $\square$\end{trivlist}}
\newenvironment{definition}[1][Definition]{\begin{trivlist}
\item[\hskip \labelsep {\bfseries #1}]}{\end{trivlist}}
\newenvironment{remark}[1][Remark]{\begin{trivlist}
\item[\hskip \labelsep {\bfseries #1}]}{\end{trivlist}}
\newenvironment{remarks}[1][Remarks]{\begin{trivlist}
\item[\hskip \labelsep {\bfseries #1}]}{\end{trivlist}}
\newenvironment{theorem*}[1][Theorem]{\begin{trivlist}
\item[\hskip \labelsep {\bfseries #1}]}{\end{trivlist}}
\begin{document}
\title{On the Schneider-Vigneras functor \\ for principal series}
\author{M\'arton Erd\'elyi \\ Alfr\'ed R\'enyi Institute of Mathematics, Budapest \\ \texttt{merdelyi@freestart.hu}}

\maketitle

\begin{abstract}
We study the Schneider-Vigneras functor attaching a module over the Iwasawa algebra $\Lambda(N_0)$ to a $B$-representation for irreducible modulo $\pi$ principal series of the group $\mathrm{GL}_n(F)$ for any finite field extension $F|\mathbb{Q}_p$.

\textbf{Keywords:} p-Adic Langlands programme; Smooth modulo p representations; Principal series; Schneider-Vigneras functor; 
\end{abstract}

\section{Introduction}

Let $\mathbb{Q}_p$ be the field of $p$-adic numbers, $\overline{\mathbb{Q}}_p$ its algebraic closure, $F,K\leq\overline{\mathbb{Q}}_p$ finite extensions of $\mathbb{Q}_p$. Let $o_F$, respectively $o_K$ be the rings of integers in $F$, respectively in $K$, $\pi_F\in o_F$ and $\pi_K\in o_K$ uniformizers, $\nu_F$ and $\nu_K$ the standard valuations and $k_F=o_F/\pi_Fo_F$, $k_K=o_K/\pi_Ko_K$ the residue fields.

The Langlands philosophy predicts a natural correspondence between certain admissible unitary representations of $\mathrm{GL}_n(F)$ over Banach $K$-vector spaces and certain $n$-dimensional $K$-representations of the Galois-group \break $\mathrm{Gal}(\overline{\mathbb{Q}}_p|F)$.

Colmez proved the existence of such a correspondence in the case of $\mathrm{GL}_2(\mathbb{Q}_p)$, but for any other group even the conjectural picture is not developed yet. It turned out, that Fontaine's theory of $(\varphi,\Gamma)$-modules is a fundamental intermediary between the representations of $\mathrm{Gal}(\overline{\mathbb{Q}_p}/\mathbb{Q}_p)$ and the representations of $\mathrm{GL}_2(\mathbb{Q}_p)$. Schneider and Vigneras managed to generalize parts of Colmez's work to reductive groups other than $\mathrm{GL}_2(\mathbb{Q}_p)$.

Our aim is to understand the construction of Schneider and Vigneras, attaching a generalized $(\varphi, \Gamma)$-module to a smooth torsion $o_K$-representation of $G$, for principal series representations $V$ in the case $G=\mathrm{GL}_n(F)$. Originally this functor (which we denote by $D$) is defined only for $F=\mathbb{Q}_p$, but our considerations work for any finite extension $F|\mathbb{Q}_p$ and the analogous definitions.

~\\

In order to that, we need to understand the $B_+$-module structure of the principal series, where $B_+$ is a certain submodule of a Borel subgroup $B$ in $G$. In section 3 we decompose $G$ to open $N_0$-invariant subsets (where $N_0$ is a totally decomposed compact open subgroup in the unipotent radical of $B$), indexed by the Weyl group.

With the help of this in section 4 we prove that there exists a minimal element $M_0$ in the set of generating $B_+$-subrepresentations of $V$.

Now we have that $D(V)=M_0^*$ - the dual of this minimal $B_+$-sub\-rep\-resen\-tation. We do not know whether it is finitely generated or it has rank 1 as a module over $\Omega(N_0)=\Lambda(N_0)/\pi_K\Lambda(N_0)$ (where $\Lambda(N_0)$ is the Iwasawa algebra of $N_0$). However, we show that in some sense only a rank 1 quotient of $D(V)$ is relevant if we want to get an \'etale $(\varphi,\Gamma)$-module.

In the last section we point out some properties of $M_0$, which sheds some light on why the picture is more difficult for principal series than in the case of subquotients defined by the Bruhat filtration.

~\\

\textbf{Acknowledgments.} I gratefully acknowledge the financial support and hospitality of the Central European University, and the Alfr\'ed R\'enyi Institute of Mathematics, both in Budapest.
I would like to thank Gergely Z\'abr\'adi for introducing me to this field, and for his constant help, valuable comments and all the useful discussions. I am grateful to the anymous referee for the very careful reading of the paper and the helpful remarks, and also to Levente Nagy for reading through an earlier version.

\newpage

\section{Notations}

Let $G$ be the $F$-points of a $F$-split connected reductive group over $\mathbb{Q}_p$. Let $B\leq G$ be a fixed Borel subgroup, with maximal torus $T$ and unipotent radical $N$. Let $W\simeq N_G(T)/C_G(T)$ be the Weyl group of $G$, $\Phi^+$ the set of positive roots with respect to $B$, and $N_\alpha$ denote the root subgroup for each $\alpha\in\Phi^+$. A subgroup $N_0\leq N$ is called totally decomposed if for any total ordering of $\Phi^+$ we have $N_0=\prod_{\alpha\in\Phi^+}(N_0\cap N_\alpha)$.

As an $o_K$-representation of $G$ we mean a pair $V=(V,\rho)$, where $V$ is a torsion $o_K$-module, $\rho:G\to\mathrm{GL}(V)$ is a group homomorphism. $V$ is smooth if $\rho$ is locally constant ($\forall v\in V~\exists U\subset G$ open, such that $\forall u\in U: \rho(u)v=v$). $V$ is admissible if for any $U\leq G$ open subgroup, the vector space $k_K\otimes_{o_K}V^U$ is finite dimensional.

For an $o_K$-representation $V$ let $V^*=\mathrm{Hom}_{o_K}(V,K/o_K)$ be the Pontrjagin dual of $V$. Pontrjagin duality sets up an anti-equivalence between the category of torsion $o_K$-modules and the category of all compact linear-topological $o_K$-modules.

Let $G_0\leq G$ be a compact open subgroup and $\Lambda(G_0)$ denote the completed group ring of the profinite group $G_0$ over $o_K$. Any smooth $o_K$-representation $V$ is the union of its finite $G_0$-subrepresentations, therefore $V^*$ is a left $\Lambda(G_0)$-module (through the inversion map on $G_0$).

Let $\Omega(G_0)=\Lambda(G_0)/\pi_K\Lambda(G_0)$. $\Omega(N_0)$ is noetherian and has no zero divisors, so it has a fraction (skew) field. If $M$ is a $\Omega(N_0)$-module, by the rank of $M$ we mean $\dim_{k_K}(\mathrm{Frac}(\Omega(N_0))\otimes_{\Omega(N_0)}M)$.

~\\

From now on fix $n\in\mathbb{N}$, and let $G=\mathrm{GL}_n(F)$, and $G_0=\mathrm{GL}_n(o_F)$.

Let $B$ be the set of upper triangular matrices in $G$, $T$ the set of diagonal matrices, $N$ the set of upper triangular unipotent matrices. Let $N^-$ be the lower unipotent matrices - the opposite of $N$ - and $N_0=N\cap G_0$ - a totally decomposed compact open subgroup of $N$ - those matrices wich has coefficients in $o_F$, define the following submonoid of $T$:
\[T_+=\{t\in T|tN_0t^{-1}\subset N_0\}=\{\mathrm{diag}(x_1,x_2,\dots,x_n)|i>j:\nu_F(x_i)\geq\nu_F(x_j)\}.\]

We have the following partial ordering on $T_+$: $t\leq t'$ if there exists $t''\in T_+$ such that $tt''=t'$. Let $B_+=N_0T_+$, this is a submonoid of $B$.

By the abuse of notation let $w\in W$ denote also the permutation matrices - representatives of $W$ in $G$ (with $w_{ij}=1$ if $w(j)=i$, and $w_{ij}=0$ otherwise), and also the corresponding permutation of the set $\{1,2,\dots,n\}$. For $w\in W$ denote length of $w$ - the length of the shortest word representing $w$ in the terms of the standard generators of $W$ - by $l(w)$.

Let the kernel of the projection $pr:G_0\to\mathrm{GL}_n(k_F)$ be $U^{(1)}$. This is a compact open pro-$p$ normal subgroup of $G_0$. We have $G=G_0B$ and $U^{(1)}\subset(N^-\cap U^{(1)})B$.

~\\

Let $C^\infty(G)$ (respectively $C^\infty_c(G)$) denote the set of locally constant \break $G\to k_K$ functions (respectively locally constant functions with compact support), with the group $G$ acting by left multiplication ($gf:x\mapsto f(g^{-1}x)$ for $f\in C^\infty(G)$ and $g,x\in G$). Let
\[\chi=\chi_1\otimes\chi_2\otimes\dots\otimes\chi_n:T\to k_K^*\]
be a locally constant character of $T$ with $\chi_i:F^*\to k_K^*$ multiplicative. Note that then for all $i$ $\chi_i(1+\pi_Fo_F)=1$ and $\chi_i(o_F^*)\subset k_F^*\cap k_K^*\leq\overline{\mathbb{F}_p}^*$. Since $T\simeq B/[B,B]$, also denote the correspondig $B\to k_K^*$ character by $\chi$. Let 
\[V=\mathrm{Ind}^{G}_{B}(\chi)=\{f\in C^\infty(G)|\forall g\in G, b\in B:f(gb)=\chi^{-1}(b)f(g)\}\]
$V$ is called a principal series representation of $G$. $V$ is irreducible exactly when for all $i$ we have $\chi_i\neq\chi_{i+1}$ (\cite{Ol}, theorem 4).
For any open right $B$-invariant subset $X\subset G$ we write $\mathrm{Ind}^X_B=\{F\in\mathrm{Ind}^G_B(\chi)|F|_{G\setminus X}\equiv0\}$.

We can understand the stucture of $V$ better (see \cite{Vi}, section 4.), by the Bruhat decomposition $G=\bigcup_{w\in W}BwB$. Let $\prec$ denote the strong Bruhat ordering (see \cite{Jan} II. 13.7): we say $w'\prec w$ for $w\neq w'\in W$ if there exist transpositions $w_1,w_2,\dots, w_i\in W$ such that $w'=ww_1w_2\dots w_i$ and \break $l(w)>l(ww_1)>l(ww_1w_2)>\dots>l(ww_1w_2\dots w_i)$. Fix a total ordering $\prec_T$ refining the Bruhat ordering $\prec$ of $W$, and let
\[w_1=\mathrm{id}_W\prec_T w_2\prec_T w_3\prec_T\dots\prec_T w_{n!}=w_0.\]
Let us denote by $G_m=\bigcup_{1\leq l\leq m}Bw_lB$ - a closed subset of $G$. We obtain a descending $B$-invariant filtration of $V$ by
\[V_m=\mathrm{Ind}^{G\setminus G_m}_B(\chi)=\{F\in\mathrm{Ind}^G_B(\chi)|F|_{G_m}\equiv0\}\qquad(0< m\leq n!),\]
with quotients $V_{m-1}/V_m$ via $f\mapsto f(\cdot w_m)$ isomorphic to \break $V(w_m,\chi)=C^\infty_c(N/N'_{w_m})$ (see \cite{SchVi}, section 12), where $N'_{w_m}=N\cap w_mNw_m^{-1}$, with $N$ acting by left translations and $T$ acting via
\[(t\phi)(n)=\chi(w_m^{-1}tw_m)\phi(t^{-1}nt).\]

For any $w\in W$ put
\[N_w=\{n\in N|\forall i<j, w^{-1}(i)<w^{-1}(j):n_{ij}=0\}=N\cap wN^-w^{-1}\leq N,\]
and $N_{0,w}=N_0\cap N_w$. Then we have the following form of the Bruhat decomposition $G=\coprod_{w\in W}N_wwB$.

\section{The action of $B_+$ on $G$}

The first goal is to partition $G$ to $N_0$-invariant open subsets $\{U_w|w\in W\}$ indexed by the Weyl-group, which are respected by the $B_+$-action in the sense that if $x\in U_w$ $b\in B_+$ then there exists $w'\preceq w$ in $W$ such that $b^{-1}x\in U_{w'}$.

\begin{definition}
Let for any $w\in W$ $r_w:N^-\cap G_0\to G(k_F), n^-\mapsto pr(wn^-w^{-1})$, $R_w=wr_w^{-1}(N_0(k_F))$, $R=\cup_{w\in W}R_w$.
\end{definition}

We have that
\[R_w=\left\{(a_{ij})\in G|\forall i,j: a_{ij}\left\{\begin{array}{ll}=1, & \mathrm{if~}w^{-1}(i)=j \\ =0, & \mathrm{if~}w^{-1}(i)<j \\ \in o_F, & \mathrm{if~}w^{-1}(i)>j\mathrm{~and~}w(j)>i \\ \in \pi_F o_F, & \mathrm{if~}w^{-1}(i)>j\mathrm{~and~}w(j)<i\end{array}\right.\right\}\]

For $n=3$ in details (with $o=o_F$ and $\pi=\pi_F$):

~\\
\noindent
\bgroup
\def\arraystretch{0.8}
\setlength\tabcolsep{1pt}
\begin{tabular}{|c|c||c|c|}
\hline
$w$ & $R_w$ & $w$ & $R_w$ \\
\hline
$\mathrm{id}=\left(\begin{array}{ccc} 1 & 0 & 0 \\ 0 & 1 & 0 \\ 0 & 0 & 1\end{array}\right)$ & $\left(\begin{array}{ccc} 1 & 0 & 0 \\ \pi o & 1 & 0 \\ \pi o & \pi o & 1\end{array}\right)$ &
$(23)=\left(\begin{array}{ccc} 0 & 1 & 0 \\ 1 & 0 & 0 \\ 0 & 0 & 1\end{array}\right)$ &  $\left(\begin{array}{ccc} 1 & 0 & 0 \\ \pi o & o & 1 \\ \pi o & 1 & 0\end{array}\right)$ \\
\hline
$(12)=\left(\begin{array}{ccc} 0 & 1 & 0 \\ 1 & 0 & 0 \\ 0 & 0 & 1\end{array}\right)$ & $\left(\begin{array}{ccc} o & 1 & 0 \\ 1 & 0 & 0 \\ \pi o & \pi o & 1\end{array}\right)$ &
$(123)=\left(\begin{array}{ccc} 0 & 0 & 1 \\ 1 & 0 & 0 \\ 0 & 1 & 0\end{array}\right)$ & $\left(\begin{array}{ccc} o & o & 1 \\ 1 & 0 & 0 \\ \pi o & 1 & 0\end{array}\right)$ \\
\hline
$(132)=\left(\begin{array}{ccc} 0 & 1 & 0 \\ 0 & 0 & 1 \\ 1 & 0 & 0\end{array}\right)$ & $\left(\begin{array}{ccc} o & 1 & 0 \\ o & \pi o & 1 \\ 1 & 0 & 0\end{array}\right)$ &
$(13)=\left(\begin{array}{ccc} 0 & 0 & 1 \\ 0 & 1 & 0 \\ 1 & 0 & 0\end{array}\right)$ & $\left(\begin{array}{ccc} o & o & 1 \\ o & 1 & 0 \\ 1 & 0 & 0\end{array}\right)$ \\
\hline
\end{tabular}
\egroup
~\\

Let $N(k_F)$ be the $k_F$-points of $N$ (the upper triangular unipotent matrices with coefficients in $k_F$). $k_F$ has canonical (multiplicative) injection to \break $o_F\subset F$, hence any subgroup $H(k_F)\leq N(k_F)$ is mapped injectively to $N_0$ (however this is not a group homomorphism). We denote this subset of $N_0$ by $\widetilde{H(k_F)}$.

\begin{proposition}
\label{uxbdecomposition}
A set of double coset representatives of $U^{(1)}\setminus G/B$ is $\cup_{w\in W}\widetilde{N_w(k_F)}w$. Every element of $G$ can be written uniquely in the form $rb$ with $r\in R$ and $b\in B$.
\end{proposition}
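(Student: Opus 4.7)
For the first assertion, I would use the Iwasawa decomposition $G = G_0 B$ to identify $G/B$ with $G_0/(G_0 \cap B)$, and then quotient on the left by $U^{(1)}$. Since $pr\colon G_0 \to \mathrm{GL}_n(k_F)$ has kernel $U^{(1)}$ and sends $G_0 \cap B$ onto $B(k_F)$, this yields a natural bijection $U^{(1)}\backslash G/B \cong \mathrm{GL}_n(k_F)/B(k_F)$. The classical Bruhat decomposition $\mathrm{GL}_n(k_F) = \coprod_{w\in W} N_w(k_F)\,w\,B(k_F)$ then provides $\bigcup_w N_w(k_F)\,w$ as a set of representatives for $\mathrm{GL}_n(k_F)/B(k_F)$, and lifting through the canonical multiplicative embedding $k_F\hookrightarrow o_F$ yields $\bigcup_w \widetilde{N_w(k_F)}\,w$.

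For the second assertion I would introduce the Iwahori subgroup $J := pr^{-1}(B(k_F)) = U^{(1)}(G_0 \cap B)$. Quotienting the first assertion further by $B(k_F)$ on the left gives $J\backslash G/B \cong B(k_F)\backslash \mathrm{GL}_n(k_F)/B(k_F) = W$, so $G = \coprod_w JwB$ disjointly. The key step is the Iwahori-style factorization $J = (J \cap wN^-w^{-1})\cdot(J \cap wBw^{-1})$ with unique factorization, obtained by writing $J$ as a product of $T\cap G_0$ together with the root-subgroup intersections $N_\alpha \cap J$ (equal to $N_\alpha \cap G_0$ when $\alpha > 0$ and to $N_\alpha \cap U^{(1)}$ when $\alpha < 0$), and splitting the factors according to the sign of $w^{-1}(\alpha)$. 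Since $J \cap wBw^{-1}$ fixes $wB \in G/B$, this factorization forces $JwB = (J \cap wN^-w^{-1})\cdot wB$, and comparing with the paper's matrix description identifies $(J \cap wN^-w^{-1})\,w$ with $R_w$ (up to a harmless $T\cap G_0$ factor absorbed into $B$). Thus $JwB = R_w\cdot B$, and every $g\in G$ is of the form $rb$ with $r\in R_w$, $b\in B$ for the unique $w$ given by the $J$-cell of $g$.

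For uniqueness, suppose $g = rb = r'b'$ with $r\in R_w$, $r'\in R_{w'}$. Then $rB = r'B$ forces $JwB = Jw'B$, hence $w = w'$ by the disjointness above. Writing $r = wn^-$ and $r' = w(n')^-$ with $n^-, (n')^-\in N^-\cap G_0$, the identity $r^{-1}r' = (n^-)^{-1}(n')^-$ lies in $B\cap N^- = \{e\}$, forcing $r = r'$ and then $b = b'$. The main obstacle I anticipate is pinning down the Iwahori factorization of $J$ with enough precision to match the explicit $R_w$; this reduces to a standard but slightly technical root-subgroup calculation.
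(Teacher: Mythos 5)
Your proposal is correct, but for the second assertion it follows a genuinely different route from the paper. The first assertion is handled essentially identically in both (finite Bruhat decomposition of $\mathrm{GL}_n(k_F)$ transported through $G=G_0B$ and $pr$). For the factorization $G=\coprod_{w}R_wB$, however, the paper never introduces the Iwahori subgroup: starting from $g=unwb$ with $u\in U^{(1)}$ and $n\in\widetilde{N_w(k_F)}$, it conjugates the representative to the left, $g=w(w^{-1}nw)u'b$ with $u'\in U^{(1)}$, and then needs only the single untwisted inclusion $U^{(1)}\subset(N^-\cap U^{(1)})B$ recorded in Section 2 to write $u'=n'b'$; the point is that $n\in N_w$ forces $w^{-1}nw\in N^-$, so $w^{-1}nw\,n'$ lies in $r_w^{-1}(N_0(k_F))$ and no $w$-dependent factorization is ever needed. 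Your route instead rests on the conjugated Iwahori factorization $J=(J\cap wN^-w^{-1})(J\cap wBw^{-1})$ for every $w\in W$, a strictly stronger input: it is true and standard (for $\mathrm{GL}_n$ it follows from a Gaussian-elimination/root-subgroup argument), but it is exactly the technical heart of your plan and you only sketch it, whereas the paper's conjugation trick makes that work unnecessary. What your approach buys is the cleaner structural statement $U_w=JwB=R_wB$, exhibiting each $U_w$ as a single Iwahori double coset; this makes the disjointness transparent and would render the subsequent corollary (left $N_0$-invariance of $U_w$) immediate from $N_0\subset J$. Two small remarks: your hedge about a residual $T\cap G_0$ factor is unnecessary, since elements of $wN^-w^{-1}$ are unipotent with all diagonal entries equal to $1$, so their reductions lie in $B(k_F)$ exactly when they lie in $N(k_F)$ and hence $(J\cap wN^-w^{-1})w$ is precisely $R_w$; and the uniqueness step (disjointness of the cells together with $B\cap N^-=\{\mathrm{id}\}$) coincides with the paper's.
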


\begin{proof}
By the Bruhat decomposition of $G(k_F)$ a set of double coset repre\-sentatives of
$U^{(1)}\setminus G_0/(B\cap G_0)$ is the set as above. Since $G=G_0B$, we have the first part of proposition.

Let $g=unwb\in G$ with $u\in U^{(1)}$, $w\in W$, $n\in\widetilde{N_w(k_F)}$ and $b\in B$. Then $g=w(w^{-1}nw)u'b$ with $u'=w^{-1}n^{-1}unw\in U^{(1)}$. But then there exist $n'\in N^-\cap U^{(1)}$ and $b'\in B$ such that $u'=n'b'$. Then $g=w(w^{-1}nwn')(b'b)$, where $w^{-1}nwn'\in r_w^{-1}(N_0(k_F))$ because of the definition of $N_w$.

For any $w\in W$ we clearly have $U^{(1)}\widetilde{N_w(k_F)}wB=R_wB$. Hence the uniqueness follows: if $rb=r'b'$ then there exists $w\in W$ such that $r,r'\in R_w$ and $b'b^{-1}=(r'^{-1}w^{-1})(wr)\in B\cap N^-=\{\mathrm{id}\}$.
\end{proof}

\begin{definition}
For any $w\in W$ let $U_w=U^{(1)}\widetilde{N_w(k_F)}wB$. This way we partitioned $G$ into open subsets indexed by the Weyl group. We obviously have $U_w=R_wB$.
\end{definition}

\begin{corollary}
For any $w\in W$ we have that $U_w$ is (left) $N_0$-invariant.
\end{corollary}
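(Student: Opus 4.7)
The plan is to reduce the $N_0$-invariance of $U_w$ to the Bruhat decomposition over the residue field $k_F$, using that the pro-$p$ subgroup $U^{(1)}$ is normal in $G_0$ and hence in $N_0$.

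By Proposition \ref{uxbdecomposition} every $g\in G$ has a \emph{unique} decomposition $g=rb$ with $r\in R$, $b\in B$, and $U_w=R_wB$. I would therefore take $g\in U_w$, write $g=rb$ with $r\in R_w\subseteq G_0$, and pick $n_0\in N_0$; it suffices to show that the unique decomposition $n_0r=r'b'$ satisfies $r'\in R_w$, because then $n_0g=r'(b'b)\in R_wB=U_w$. Since $n_0r\in G_0$ and $R\subseteq G_0$, one also has $b'\in B\cap G_0$, so the whole question lives in $G_0$ and can be tested after reduction modulo $U^{(1)}$.

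The key ingredient is the identification $pr(R_w)=N_w(k_F)\cdot w$, which is immediate from $R_w=w\,r_w^{-1}(N_0(k_F))$: for $r=wn^-\in R_w$ one has $pr(r)=\bigl(w\,pr(n^-)\,w^{-1}\bigr)\cdot w\in N_w(k_F)\cdot w$. Applying $pr$ to $n_0r=r'b'$ then gives
\[
pr(r')\cdot pr(b')\in N(k_F)\cdot N_w(k_F)\cdot w = N(k_F)\cdot w.
\]
The standard decomposition $N(k_F)=N_w(k_F)\cdot\bigl(N(k_F)\cap wN(k_F)w^{-1}\bigr)$ combined with $w^{-1}\bigl(N(k_F)\cap wN(k_F)w^{-1}\bigr)w\subseteq N(k_F)\subseteq B(k_F)$ shows that $N(k_F)\cdot w\subseteq N_w(k_F)\cdot w\cdot B(k_F)$, i.e.\ the whole product lands in the single Bruhat cell through $w$. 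Since $pr(r')\in\bigsqcup_{w'\in W}N_{w'}(k_F)\cdot w'$ (by applying the same identification to each $R_{w'}$) and the Bruhat cells are disjoint, uniqueness of the Bruhat decomposition in $G(k_F)$ forces $pr(r')\in N_w(k_F)\cdot w$, and hence $r'\in R_w$.

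The only technical points are the matrix identification $pr(R_w)=N_w(k_F)\cdot w$ and the observation that the $rb$-decomposition restricts to $G_0$, both of which follow directly from the explicit description of $R_w$ in the tabulated form given above; I do not expect a substantive obstacle beyond these bookkeeping checks.
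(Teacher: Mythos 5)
Your proof is correct, but it takes a different route from the paper. You work modulo $U^{(1)}$: you write $n_0g=r'b'b$ via the uniqueness statement of Proposition \ref{uxbdecomposition}, observe that everything lies in $G_0$ (so in particular $b'\in B\cap G_0$, hence $pr(b')\in B(k_F)$ --- the one small bookkeeping step you leave implicit but have already justified), and then pin down the Weyl component of $r'$ by the disjointness of the Bruhat cells of $\mathrm{GL}_n(k_F)$, using $pr(R_{w'})\subseteq N_{w'}(k_F)w'$ and $N(k_F)w\subseteq N_w(k_F)wB(k_F)$. The paper instead argues entirely upstairs and constructively: it writes $x=unwb\in U^{(1)}\widetilde{N_w(k_F)}wB$, factors $n'n=mm'$ through $N_0=N_{0,w}(N'_w\cap N_0)$, splits $m=m_1m_0$ with $m_1\in N_w\cap U^{(1)}$ and $m_0\in\widetilde{N_w(k_F)}$, and uses normality of $U^{(1)}$ together with $w^{-1}N'_ww\subseteq B$ to exhibit $n'x$ explicitly in the form $U^{(1)}\widetilde{N_w(k_F)}wB$, never invoking uniqueness or disjointness of cells. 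Your version is shorter once Proposition \ref{uxbdecomposition} is in hand and makes transparent that the invariance is really a residue-field Bruhat phenomenon; the paper's version has the advantage of producing the decomposition of $n'x$ explicitly (a manipulation reused later, e.g.\ in the proof of Proposition \ref{rendezes}) and of not relying on the uniqueness half of the earlier proposition.
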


\begin{proof}
Let $n'\in N_0$ and $x=unwb\in U^{(1)}\widetilde{N_w(k_F)}wB$. We have \break $N_0=N_{0,w}(N'_w\cap N_0)$, thus $n'n=mm'$ for some $m\in N_{0,w}$ and $m'\in N'_w\cap N_0$, moreover we can write $m=m_1m_0\in(N_w\cap U^{(1)})\widetilde{N_w(k_F)}$. By the definition of $N'_w$
\[n'x=(n'un'^{-1}m_1)m_0w(w^{-1}m'wb)\in U^{(1)}\widetilde{N_w(k_F)}wB,\]
meaning that $U_w$ is $N_0$-invariant.
\end{proof}

\begin{proposition}
\label{rendezes}
Let $y\in U_w=R_wB$, $nt\in B_+=N_0T_+$, and \break $x=t^{-1}n^{-1}y\in U_{w'}=R_{w'}B$. Then $w'\preceq w$.
\end{proposition}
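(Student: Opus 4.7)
I would first use the previous corollary: since $n^{-1} \in N_0$ and $U_w$ is $N_0$-invariant, $n^{-1}y \in U_w$, reducing to the case $n = \mathrm{id}$. Writing $y = rb$ with $r \in R_w$ and $b \in B$, and using that $U_{w'} = R_{w'}B$ is invariant under right multiplication by $B$, the task is to exhibit $w' \preceq w$ with $t^{-1}r \in R_{w'}B$.

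The conceptual key is a mod-$\pi_F$ characterization of the strata: $U_w B/B$ is the preimage, under the reduction $\mathrm{pr}\colon G_0/B_0 \to G(k_F)/B(k_F)$, of the Bruhat cell $B(k_F) w B(k_F)/B(k_F)$, where $G/B$ is identified with $G_0/B_0$ via $G = G_0 B$. This is checked using the factorisation $r = w r'$ with $r' = w^{-1}r \in N^- \cap G_0$ satisfying $\mathrm{pr}(w r' w^{-1}) \in N_0(k_F)$: the defining condition forces $\mathrm{pr}(r')$ to be supported only at positions $(k,j)$ with $k > j$ and $w(j) > w(k)$, placing $\mathrm{pr}(r')$ inside $w^{-1}Nw$, so that $\mathrm{pr}(r) = w\,\mathrm{pr}(r') \in Nw \subset B(k_F) w B(k_F)$. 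With this dictionary, the statement to prove becomes: any $G_0$-representative $x_0$ of the coset $t^{-1}yB$ has $\mathrm{pr}(x_0)$ in the closed Schubert variety $\bigcup_{w' \preceq w} B(k_F) w' B(k_F)$.

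To get there concretely, I would rewrite $t^{-1}r = w \tilde{r}' t''$ with $t'' = w^{-1} t^{-1} w \in T$ and $\tilde{r}' := t'' r' (t'')^{-1} \in N^-$, whose $(k,j)$-entry for $k > j$ is $(x_{w(j)}/x_{w(k)})\,r'_{kj}$. The $T_+$-condition on $t$ and the integrality of $r'$ together force $\tilde{r}'_{kj}$ to stay in $\pi_F o_F$ at every non-inversion position of $w$; non-integral entries can appear only at inversion positions, where $w(j) > w(k)$ with $k > j$. The main technical obstacle is then to show that after these non-integral entries are absorbed into a right multiplication in $B$ (bringing the product back into $G_0$), the reduction $\mathrm{pr}(x_0)$ lands in the closed Schubert variety. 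I would handle this by processing the non-integral entries one at a time via the elementary Bruhat identity in the appropriate rank-two subgroup --- rewriting $\left(\begin{smallmatrix} 1 & 0 \\ a & 1 \end{smallmatrix}\right)$ with $\nu_F(a) < 0$ as an element of $NsB$, a move which amounts to right multiplication by a simple reflection that decreases length, i.e.\ exactly the elementary move generating $\prec$. Tracking these pivots inductively gives a decomposition $t^{-1}r = r'' b''$ with $r'' \in R_{w'}$ for some $w' \preceq w$, as required.
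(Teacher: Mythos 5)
Your set-up is sound and matches the paper's: the reduction to $n=\mathrm{id}$ via $N_0$-invariance, the decomposition $y=rb$ with $r\in R_w$, the rewriting $t^{-1}r=w\,\tilde r'\,t''$ with $t''=w^{-1}t^{-1}w$, and the valuation analysis (with the convention $tN_0t^{-1}\subset N_0$, the entries of $\tilde r'$ stay in $\pi_Fo_F$ at non-inversion positions and can lose integrality only at inversion positions of $w$) are all correct, and the mod-$\pi_F$ dictionary identifying $U_{w'}\cap G_0$ with the preimage of the Bruhat cell of $w'$ in $G(k_F)$ is a correct restatement of Proposition \ref{uxbdecomposition}. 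The observation that a pivot at an inversion position $(k,j)$ corresponds to right multiplication by the transposition $(j,k)$, which decreases length precisely because $w(j)>w(k)$, is also the right heuristic.

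The gap is that the final paragraph asserts exactly the part that constitutes the actual proof. A single rank-two pivot at a non-integral entry does not leave the rest of the matrix alone: absorbing the entry into $B$ mixes the pivot column with the column $w^{-1}(i_0)$ of the pivot row, so it creates new entries of negative valuation, and entries at non-inversion positions of the \emph{updated} Weyl element that are integral but no longer in $\pi_Fo_F$, i.e.\ the matrix is no longer of the form $w''\cdot(\text{element satisfying the }R_{w''}\text{ pattern})$. Since your guarantee that ``each move decreases length'' was derived from the $R_w$-valuation pattern, that guarantee is not available for the subsequent moves without a renewed argument; you must also show the cascade terminates. This bookkeeping is the bulk of the paper's proof: the column-by-column induction producing $(t^{(j)},r^{(j)},b^{(j)},w^{(j)})$, the chain of pivot columns $j_0<j_1<\dots<j_s$ with each transposition $(j_i,j_{i+1})$ shown to decrease the inversion number, and the auxiliary column combination $s^{(k)}$ controlling which row carries the unit entry at each later column. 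None of this is supplied by ``tracking these pivots inductively''. Two smaller points: the moves are by general transpositions, not simple reflections (length-decreasing simple reflections generate only the weak order, and the paper's $\prec$ is generated by arbitrary length-decreasing transpositions, so your phrase ``simple reflection \dots exactly the elementary move generating $\prec$'' is inaccurate, though not fatal); and you forgo the paper's harmless reduction of $t$ to the generators $\mathrm{diag}(\pi_F,\dots,\pi_F,1,\dots,1)$, which keeps the valuation drops bounded by $1$ and materially simplifies the cascade you would have to control.
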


\begin{proof}
Let $y=rb$ with $r\in R_w$ and $b\in B$. By the previous proposition we may assume that $n=\mathrm{id}$. If $t=\mathrm{diag}(t_1,t_2,\dots,t_n)\in G_0$, then
\[x=w(w^{-1}t^{-1}w(w^{-1}r)w^{-1}tw)(w^{-1}t^{-1}wb),\]
where $w^{-1}t^{-1}w(w^{-1}r)w^{-1}tw\in r_w^{-1}(N_0(k_F))$, because it is in $N^-$ and the coefficients under the diagonal have the same valuation as those in $w^{-1}r$. $T_+$ as a monoid is generated by $T\cap G_0$, the center $Z(G)$ and the elements with the form $(\pi_F,\pi_F,\dots,\pi_F,1,1,\dots,1)$, hence it is enough to prove the proposition for such $t$-s.

So fix $t=(t_1=\pi_F,t_2=\pi_F,\dots,t_l=\pi_F,t_{l+1}=1,t_{l+2}=1,\dots,t_n=1)$, $r=(r_{ij})$ and try to write $x$ in the form as in Proposition \ref{uxbdecomposition}. For all $j=0,1,2,\dots,n$ we construct inductively a decomposition $x=(t^{(j)})^{-1}r^{(j)}b^{(j)}$ together with $w^{(j)}\in W$, where
\begin{itemize}
\item $w^{(j+1)}\preceq w^{(j)}$ for $j<n$ and such that the first $j$ columns of $w^{(j)}$ are the same as the first $j$ columns of $w^{(j+1)}$,
\item $t^{(j)}=\mathrm{diag}(t^{(j)}_i)\in T$ with
\[t^{(j)}_i=\left\{\begin{array}{ll} 1, & \mathrm{~if~} (w^{(j)})^{-1}(i)\leq j \\ t_i, & \mathrm{~if~} (w^{(j)})^{-1}(i)>j \end{array}\right.,\]
\item $r^{(j)}\in R_{w^{(j)}}$, and if we change the first $j$ columns of $r^{(j)}$ to the first $j$ columns of $(t^{(j)})^{-1}r^{(j)}$ it is still in $R_{w^{(j)}}$ (by de definition of $t^{(j)}$ it is enough to verify the condition for $(t^{(j)})^{-1}r^{(j)}$),
\item $b^{(j)}\in B$.
\end{itemize}
Then $w^{(n)}\preceq w^{(n-1)}\preceq w^{(n-2)}\preceq\dots\preceq w^{(1)}=w$. However for $j=n$ we have $t^{(n)}=\mathrm{id}$, hence $w^{(n)}=w'$ by disjointness of the sets $R_vB$ for $v\in W$, so we have the proposition.

For $j=0$ we have $t^{(0)}=t, r^{(0)}=r, b^{(0)}=b$ and $w^{(0)}=w$. From $j$ to $j+1$:
\begin{itemize}
\item If $w^{(j)}(j+1)\leq l$, then let $w^{(j+1)}=w^{(j)}$, so $t^{(j+1)}=e^{-1}_{w^{(j)}(j+1)}t^{(j)}$, where for $1\leq k\leq n$ we denote $e_k=e_k(\pi)$ the diagonal matrix with $\pi_F$ in the $k$-th row and 1 everywhere else. We can choose \break $r^{(j+1)}=e_{w^{(j)}(j+1)}^{-1}r^{(j)}e_{j+1}$, and $b^{(j+1)}=e_{j+1}^{-1}b^{(j)}$.

Then the first $j$ columns of $(t^{(j+1)})^{-1}r^{(j+1)}$ are equal of those of \break $(t^{(j)})^{-1}r^{(j)}$, and the entries at place $(i,j+1)$ with $i\neq w^{(j+1)}(j+1)$ are multiplied by $\pi_F$. Because of the conditions for $r^{(j)}$, this is in $R_{w^{(j+1)}}$. The other conditions for $w^{(j+1)},t^{(j+1)},r^{(j+1)}$ and $b^{(j+1)}$ obviously hold.
\item If $w^{(j)}(j+1)>l$ and if $\nu_F(r^{(j)}_{i,j+1})\geq 1$ for all $i\leq l$, then it suffices to choose $w^{(j+1)}=w^{(j)}, t^{(j+1)}=t^{(j)}, r^{(j+1)}=r^{(j)}$ and $b^{(j+1)}=b^{(j)}$.
\item Assume that $w^{(j)}(j+1)>l$ and that there exists $i\leq l$ such that $\nu_F(r^{(j)}_{i,j+1})=0$. Let $i_0$ be the maximal such $i$. Then choose \break $w^{(j+1)}(j+1)=i_0$, and $t^{(j+1)}=e_{i_0}^{-1}t^{(j)}$.

Let $r'=e_{i_0}^{-1}r^{(j)}e_{j+1}((r^{(j)}_{i_0,j+1})^{-1}\cdot\pi)$, where $e_j(\alpha)$ is the diagonal matrix with $\alpha\in F$ in the $j$-th row and 1 everywhere else. Note that $r'_{i_0,j+1}=1$ and $r'$ differs from $r^{(j)}$ only in the $i_0$-th row and the $j+1$-st column. But $(t^{(j+1)})^{-1}r'$ is not in $\mathrm{GL}_n(o_F)$ - for example $\nu_F(r'_{i_0,(w^{(j)})^{-1}(i_0)})=-1$, and there might be some other elements of $r'$ in the $i_0$-th row and columns between the $j+2$-nd and $j'=(w^{(j)})^{-1}(i_0)$-th.

To see this note first that $w^{(j)}(j+1)>l\geq i_0$, so $(w^{(j)})^{-1}(i_0)\neq j+1$. In particular the right multiplication with $e_{j+1}$ does not change the entry at place $(i_0,(w^{(j)})^{-1}(i_0))$. Since $r^{(j)}\in R_{w^{(j)}}$, the defining conditions of $R_{w^{(j)}}$ and that $(w^{(j)})^{-1}(i_0)\neq j+1$ imply $(w^{(j)})^{-1}(i_0)>j+1$. Thus $(t^{(j)}_{i_0})^{-1}=(t_{i_0})^{-1}=\pi_F^{-1}$, since $i_0\leq l$. By the definition of $R_{w^{(j)}}$ we have $r^{(j)}_{i_0,(w^{(j)})^{-1}(i_0)}=1$ . Therefore $r'_{i_0,(w^{(j)})^{-1}(i_0)}=\pi^{-1}$ which has valuation -1.

But note, that in the $j+1$-st column of $r'$ the $i_0$-th element is 1, all the other has valuation at least 1. Thus the first $j+1$ columns of $(t^{(j+1)})^{-1}r'$ satisfy the condition for the first $j+1$ columns of $(t^{(j+1)})^{-1}r^{(j+1)}$ - this is meaningful, because we already fixed the first $j+1$ columns of $w^{(j+1)}$.

So we want to find $r^{(j+1)}=r'b'$ with $b'\in B$ such that the first $j+1$ columns of $b'$ is those of the identity matrix, and \break $(t^{(j+1)})^{-1}r^{(j+1)}\in R_{w^{(j+1)}}$ with $w^{(j)}\preceq w^{(j+1)}$. 

Let $j_0=j+1$, and if $j_i<j'$ then
\[j_{i+1}=\min\{h|j+1<h, r'_{i_0,h}\notin o_F, w^{(j)}(j_i)>w^{(j)}(h)\}.\]
We claim that the set on the right hand side contains $j'$ if $j_i<j'$. We prove it by induction on $i$. For $i=0$ we already verified it. Assume by contradiction that $w^{(j)}(j_i)<i_0=w^{(j)}(j')$. Since $j'>j_i$ we get $r^{(j)}_{i_0,j_i}\in\pi_Fo_F$, because $r^{(j)}\in R_{w^{(j)}}$. But then $r'_{i_0,j_i}\in o_F$, because $r'\in e^{-1}_{i_0}r^{(j)}\cdot\mathrm{Mat}(o_F)$, contradicting the defining conditions of $j_i$. Thus we have $w^{(j)}(j_i)\geq i_0=w^{(j)}(j')$.

Let $s$ be minimal such that $j_s=j'$ and set $j_{s+1}=n+1$. We claim that $r^{(j+1)}$ will be in $R_{w^{(j+1)}}$ with $w^{(j+1)}=w^{(j)}(j_{s-1},j_s)(j_{s-2},j_{s-1})\dots(j_0,j_1)$. Then the condition $w^{(j+1)}\prec w^{(j)}$ holds, because the multiplication from right with each transposition $(j_i,j_{i+1})$ decreases the inversion number and the length respectively, by the definition of $j_{i+1}$.

For the existence of a $b'\in B$ such that $r'b'\in R_{w^{(j+1)}}$ we prove the following statements inductively:

\begin{lemma}
For all $j+1\leq k\leq n$ there exist
\begin{itemize}
\item $b'^{(k)}\in B$ such that the first $k$ column of $r'^{(k)}=r'b'^{(k)}$ satisfy the defining condition for the first $k$ column in $R_{w^{(j+1)}}$, and if we have $k<n$ then $r'^{(k)}$ and $r'^{(k+1)}$ differ only in the $k+1$-st column. 
\item a linear combination $s^{(k)}$ of the columns $j+1, j+2, \dots, k$ in $r'^{(k)}$ for which we have 
\[s^{(k)}_i=\left\{\begin{array}{ll}1, & \mathrm{~if~} i=i_0\\ 0, & \mathrm{~if~} (w^{(j+1)})^{-1}(i)\leq k, \mathrm{~and~} i\neq i_0 \\ \pi_Fx, & \mathrm{~for~some~} x\in o_F \mathrm{~otherwise} \end{array}\right.\]
and the maximal $i$ such that $\nu_F(s^{(k)}_i)=1$ is $w^{(j)}(j_{i'})$, where $i'$ is so, that $j_{i'}\leq k<j_{i'+1}$.
\end{itemize}
\end{lemma}

\begin{proof}
This holds for $k=j+1$ with $b'^{(j+1)}=\mathrm{id}$, $r'^{(j+1)}=r'$ and $s^{(j+1)}$ the $j+1$-st column of $r'$. To verify the condition for $s^{(j+1)}$ note that $r'_{(w^{(j)}(j+1),j+1)}=\pi$ and if $i>j+1$, then by the definition of $R_{w^{(j)}}$ we have that $r^{(j)}_{i,j+1}$ has valuation at least 1 and $r'_{(i,j+1)}=\pi_F(r^{(j)}_{i_0,j+1})^{-1}r^{(j)}_{i,j+1}$ has valuation at least 2.

Assume that we have $r'^{(k)}$, $b'^{(k)}$ and $s^{(k)}$. Let $i'$ be so that \break $j_{i'}\leq k<j_{i'+1}$ and $s'$ be the $k+1$-st column of $r'^{(k)}$ (which is equal with the $k+1$-st column of $r'$, thus for $i\neq i_0$ we have $s'_i=r^{(j)}_{i,k+1}$) and \break $s''=s'-r'^{(k)}_{(i_0,k+1)}s^{(k)}$. Then by the conditions on $s'$ we can change the $k+1$-st column of $r'^{(k)}$ to $s''$ with multiplication from right by an element $b''\in B$. Moreover $s''_{i_0}=0$, and the element in $s''$ with minimal valuation and biggest row index is the $w^{(j+1)}(k+1)$-st:
\begin{itemize}
\item If $\nu_F(r'^{(k)}_{(i_0,k+1)})\geq0$ then for $i\neq i_0$ we have $s'_i\equiv s''_i=s'_i-r'^{(k)}_{(i_0,k+1)}s^{(k)}_i\mod \pi_F$, hence the element with minimal valuation is in the row $w^{(j+1)}(k+1)=w^{(j)}(k+1)$ (because $r^{(j)}\in R_{w^{(j)}}$ and $j_{i'+1}\neq k+1$).
\item If $\nu_F(r'^{(k)}_{(i_0,k+1)})<0$ then it is -1 and for $i\neq i_0$ we have \break $s''_i=r^{(j)}_{(i,k+1)}-r'^{(k)}_{(i_0,k+1)}\cdot s^{(k)}_i$. Where on the right hand side the first term has positive valuation for $i>w^{(j)}(k+1)$ and 0 valuation for $i=w^{(j)}(k+1)$ (because $r^{(j)}\in R_{w^{(j)}}$), and the second has valuation 0=-1+1 for $i=w^{(j)}(j_{i'})$ and at least 1 for $i>w^{(j)}(j_{i'})$ (by the induction hypothesis on $s^{(k)}$). Moreover $j_{i'}\neq k+1$, because $j_{i'}\leq k$, hence $w^{(j)}(j_{i'})\neq w^{(j)}(k+1)$.

If $w^{(j)}(j_{i'})<w^{(j)}(k+1)$ then $j_{i'+1}\neq k+1$ and \break $w^{(j)}(k+1)=w^{(j+1)}(k+1)$. If $w^{(j)}(j_{i'})>w^{(j)}(k+1)$ then $j_{i'+1}=k+1$ and $w^{(j+1)}(k+1)=w^{(j+1)}(j_{i'+1})=w^{(j)}(j_{i'}).$

\end{itemize}
By multiplying this column with $(s''_{w^{(j+1)}(k+1)})^{-1}$ we get the element $r'^{(k+1)}$ (we also have to multiply the $k+1$-st row of $b''$ with $s''_{w^{(j+1)}(k+1)}$, this is $b'^{(k+1)}$). This satisfies the condition for the $k+1$-st row of $R_{w^{(j+1)}}$ because the defining conditions for $r^{(j)}\in R_{w^{(j)}}$, $s^{(k)}$ and the equality
\[\{i|(w^{(j+1)})^{-1}(i)<k+1\}=\{i|(w^{(j)})^{-1}(i)<k+1\}\setminus\{w^{(j)}(j_{i'})\}\cup\{i_0\}.\]

The last thing to verify is the existence of an appropriate linear combination $s^{(k+1)}$. Let $s^{(k+1)}=s^{(k)}-s^{(k)}_{w^{(j+1)}(k+1)}(s''_{w^{(j+1)}(k+1)})^{-1}\cdot s''.$ Since $\nu_F(s^{(k)}_{w^{(j+1)}(k+1)})>0$, we have $\nu_F(s^{(k+1)}_i)>0$ if $i\neq i_0$, and by the previous argument also $s^{(k+1)}_{w^{(j+1)}(j')}=0$ for $j'\leq k+1$ and $j'\neq j+1$.

If $w^{(j+1)}(k+1)>w^{(j)}(j_{i'})$, then $s^{(k)}_{w^{(j+1)}(k+1)}>1$ and $s^{(k+1)}\equiv s^{(k)}\mod \pi_F^2$. If $w^{(j+1)}(k+1)<w^{(j)}(j_{i'})$ then by the definition of $R_{w^{(j+1)}}$ for all $i>w^{(j+1)}(k+1)$ we have $\nu(s''_i)>1$ and again $s^{(k+1)}_i\equiv s^{(k)}_i\mod\pi_F^2$. If $w^{(j+1)}(k+1)=w^{(j)}(j_{i'})$, then by the definition of $R_{w^{(j)}}$ we have $s'_{w^{(j)}(j_{i'})}=r'_{(w^{(j)}(j_{i'}),k+1)}=0$, $s''_{w^{(j+1)}(k+1)}=0-r'^{(k)}_{(i_0,k+1)}s^{(k)}_{w^{(j)}(j_{i'})}$ and $s^{(k+1)}=$
\[=s^{(k)}-s^{(k)}_{w^{(j)}(j_{i'})}(-r'^{(k)}_{(i_0,k+1)}s^{(k)}_{w^{(j)}(j_{i'})})^{-1}\cdot\Big(s'-r'^{(k)}_{(i_0,k+1)}s^{(k)}\Big)=(r'^{(k)}_{(i_0,k+1)})^{-1}s',\]
which satisfies the condition because $s'$ is the $j_{i'+1}=k+1$-st column of $r'^{(k)}$ and because of the definition of $R_{w^{(j)}}$.
\end{proof}

To finish the proof we set $b'=b'^{(n)}$, $r^{(j+1)}=r'b'^{(n)}\in R_{w^{(j+1)}}$ and $b^{(j+1)}=(b'^{(n)})^{-1}(r^{(j)}_{i_0,j+1}\cdot e_{j+1}^{-1})b^{(j)}\in B$.
\end{itemize}
\end{proof}

\begin{corollary}
For any $w\in W$ we have $BwB=N_wwB\subset\cup_{w'\preceq w}U_{w'}$. In particular for any $0<m_0\leq n!$ we have that 
\[\bigcup_{m\geq m_0}U_{w_m}\subset G\setminus G_{m_0-1}=\bigcup_{m\geq m_0}Bw_mB.\]
\end{corollary}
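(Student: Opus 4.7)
The equality $BwB=N_wwB$ is immediate from the form of the Bruhat decomposition recalled in Section 2. One has $BwB=NwB$ (since $B=TN$ and $Tw\subset wT\subset wB$), and decomposing $N=N_wN'_w$ with $N'_w=N\cap wNw^{-1}$ yields $NwB=N_wN'_wwB=N_ww(w^{-1}N'_ww)B\subset N_wwB$; the reverse inclusion is trivial.

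For the inclusion $N_wwB\subset\bigcup_{w'\preceq w}U_{w'}$ the plan is to reduce everything to Proposition~\ref{rendezes}. Given $y=nwb$ with $n\in N_w$ and $b\in B$, I use that $T$ normalizes $N_w$ and that $N=\bigcup_{t\in T_+}t^{-1}N_0t$ (because $tN_0t^{-1}\subset N_0$ for $t\in T_+$ and suitable powers of a regular $t\in T_+$ exhaust $N$) to choose $t\in T_+$ with $n_0:=tnt^{-1}\in N_0\cap N_w=N_{0,w}$. Then
\[y=t^{-1}n_0tw b=t^{-1}n_0w\cdot(w^{-1}tw)b=t^{-1}(n_0wb'),\]
with $b'=(w^{-1}tw)b\in B$. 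It remains to show $n_0wb'\in U_w$. Writing $n_0w=w\cdot n^-$ with $n^-=w^{-1}n_0w$, the definition of $N_w$ together with $n_0\in N_0\subset G_0$ (and $w\in G_0$) give $n^-\in N^-\cap G_0$, and $r_w(n^-)=pr(wn^-w^{-1})=pr(n_0)\in N_0(k_F)$, so $n_0w\in wr_w^{-1}(N_0(k_F))=R_w$ and $n_0wb'\in R_wB=U_w$. By Proposition~\ref{uxbdecomposition} there is a unique $w'$ with $y\in U_{w'}$, and Proposition~\ref{rendezes} applied to $n_0wb'\in U_w$ with $t\in T_+$ and trivial unipotent part gives $w'\preceq w$.

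The ``in particular'' assertion is then formal: combining the first claim with the pairwise disjointness of the $U_{w'}$ (Proposition~\ref{uxbdecomposition}) and of the Bruhat cells, we see that $U_{w'}\cap Bw_lB\neq\emptyset$ forces $w'\preceq w_l$. Since $\prec_T$ refines $\prec$, $w_m\preceq w_l$ implies $m\leq l$, so $U_{w_m}\subset\bigcup_{l\geq m}Bw_lB$; taking the union over $m\geq m_0$ yields the displayed inclusion.

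The main potential obstacle is the choice of $t\in T_+$ (i.e.\ the fact that $N=\bigcup_{t\in T_+}t^{-1}N_0t$) together with the verification that $n_0w\in R_w$; everything else is routine bookkeeping with the decompositions already set up.
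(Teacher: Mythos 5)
Your proof is correct and takes essentially the same route as the paper: both arguments conjugate $n\in N_w$ into $N_0$ by a suitable $t\in T_+$ and then invoke Proposition \ref{rendezes}, finishing the second assertion by a formal disjointness/complement argument with the cells $U_{w'}$. The only cosmetic difference is that you verify $n_0w\in R_w$ directly and apply Proposition \ref{rendezes} with the torus element alone, whereas the paper applies it to $y=w\in R_wB$ with the full element $(n')^{-1}t\in B_+$, absorbing the unipotent part into $B_+$.
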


\begin{proof}
Let $x=n_wwb\in N_{w}wB$. Then there exists $t\in T_+$ such that \break $n'=tn_wt^{-1}\in N_0$. Thus $x=t^{-1}n'w(w^{-1}tw)b=t^{-1}n'wb''$ with $b''\in B$. By the previous proposition for $w=w\cdot\mathrm{id}\in R_wB$ and $(n')^{-1}t\in B_+$, there exist $w'\prec w$, $r_{w'}\in R_{w'}$ and $b'\in B$ such that $t^{-1}n'w=r_{w'}b'$, hence $x=r_{w'}(b'b'')\in U_{w'}$. The second assertion follows from that:
\[\bigcup_{m\geq m_0}U_{w_m}=G\setminus\bigcup_{1\leq m<m_0}U_{w_m}\subset G\setminus\bigcup_{1\leq m<m_0}Bw_mB=G\setminus G_{m_0-1}.\]
\end{proof}

\begin{remark}
We can achieve the results of this section not only for $\mathrm{GL}_n$, but different groups: let $G'$ be such that
\begin{itemize}
\item $G'$ is isomorphic to a closed subgroup in $G$ which we also denote by $G'$,
\item In $G'$ a maximal torus is $T'=T\cap G'$, a Borel subgroup $B'=B\cap G'$ with unipotent radical $N'=N\cap G'$, such that $N_{G'}(T')=N_G(T)\cap G'$ and hence $W'\leq W$ with $w_0\in W'$, with representatives $w'$ of $W'$ in $G'_0\leq G_0$ such that the representatives $w$ of $W$ in $G$ can be written in the form $w=w't$ such that $t\in T\cap G_0$.
\item $G'_0=G_0\cap G'$ with $G'=G'_0B'$ and
\item $U'^{(1)}=U^{(1)}\cap G'$ such that $U'^{(1)}\subset (N'^-\cap U'^{(1)})B'$ for $N'^-=w_0N'w_0$.
\end{itemize}

For example these condititons are satisfied for the group $\mathrm{SL}_n$.

The proof of the first proposition works for such $G'$, and from a decomposition $x=r'b'\in R_w'B'\subset G'$ we get some $r\in R_w$ and $b\in B$ such that $x=rb\in G$. Hence the $B_+'$-action on $G'$ respects the restriction of $\prec$ to $W'$ in the sense that if $x\in R_{w'}B'$ and $b'\in B'$ then there exists $w''\preceq w'$ in $W'$ such that $b'^{-1}x\in R'_{w''}B'$.
\end{remark}

\section{Generating $B_+$-subrepresentations}

For any torsion $o_K$-module $X$ with $o_K$-linear $B$-action denote the (partially ordered) set of generating $B_+$-subrepresentations of $X$ (those $B_+$-submodules $M$ of $X$ for which $BM=X$) by $\mathcal{B}_+(X)$.

For example $\mathrm{Ind}^{U_{w_0}}_B(\chi)\simeq C^\infty(N_0)$ is the minimal generating $B_+$-sub\-rep\-resen\-tation of the Steinberg representation $V_{n!-1}=\mathrm{Ind}^{Bw_0B}_B(\chi)\simeq C^\infty_c(N)$. (cf \cite{SchVi}, Lemma 2.6)

\begin{proposition}
Let $X$ be a smooth admissible and irreducible torsion $o_K$-representation of $G$. Then $M_0=B_+X^{U^{(1)}}$ is a generating $B_+$-subrepresen\-tation of $X$. For any $M\in\mathcal{B}_+(X)$ there exists a $t_+\in T_+$ such that \break $t_+M_0\subset M$.
\end{proposition}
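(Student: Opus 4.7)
The plan is to separate the two assertions. For the first, since $B_+\subset B$ we have $BM_0=BB_+X^{U^{(1)}}=BX^{U^{(1)}}$, so it is enough to prove $BX^{U^{(1)}}=X$. First I would check that $X^{U^{(1)}}\neq 0$: any nonzero cyclic $o_K[U^{(1)}]$-submodule of $X$ is a finite $o_K$-module by smoothness, and since $U^{(1)}$ is pro-$p$, a standard argument (reduce mod $\pi_K$ and apply the fact that a pro-$p$ group acting on a nonzero characteristic $p$ vector space has a fixed vector, then lift through the filtration by powers of $\pi_K$) produces a nonzero $U^{(1)}$-invariant. Next, because $U^{(1)}$ is normal in $G_0$ (it is the kernel of $pr$), every $g_0\in G_0$ preserves $X^{U^{(1)}}$; combined with $G=G_0B$ this gives $GX^{U^{(1)}}\subset BX^{U^{(1)}}$. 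Irreducibility of $X$ together with $X^{U^{(1)}}\neq 0$ forces the $G$-span of $X^{U^{(1)}}$ to be all of $X$, so $X\subset BX^{U^{(1)}}$ and equality holds.

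For the minimality statement I would first invoke admissibility together with the bounded $\pi_K$-torsion of $X$ to conclude that $X^{U^{(1)}}$ is finitely generated over $o_K$: each quotient $\pi_K^iX^{U^{(1)}}/\pi_K^{i+1}X^{U^{(1)}}$ is a quotient of the finite-dimensional $k_K$-space $X^{U^{(1)}}/\pi_KX^{U^{(1)}}$, and there are only finitely many nonzero such quotients. Pick $o_K$-generators $v_1,\dots,v_r$. Since $BM=X$, each $v_i$ admits a finite expression $v_i=\sum_j b_{ij}m_{ij}$ with $b_{ij}\in B$ and $m_{ij}\in M$. Decompose $b_{ij}=n_{ij}t_{ij}$ with $n_{ij}\in N$ and $t_{ij}\in T$. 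The heart of the argument is now to produce a single $t_+\in T_+$ such that
\[ t_+n_{ij}t_+^{-1}\in N_0 \quad\text{and}\quad t_+t_{ij}\in T_+ \]
hold simultaneously for all indices $(i,j)$.

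Both conditions amount to a finite list of lower bounds on the valuation gaps $\nu_F((t_+)_j)-\nu_F((t_+)_i)$ for $i<j$, and these can be satisfied by choosing $t_+$ sufficiently dominant in $T_+$ (this is the standard contraction of $T_+$ on $N$). With such $t_+$ fixed,
\[ t_+v_i=\sum_j (t_+n_{ij}t_+^{-1})(t_+t_{ij})m_{ij}\in B_+M=M, \]
so $t_+X^{U^{(1)}}\subset M$. The inclusion $t_+B_+\subset B_+t_+$, which follows immediately from $t_+N_0t_+^{-1}\subset N_0$, then yields
\[ t_+M_0=t_+B_+X^{U^{(1)}}\subset B_+t_+X^{U^{(1)}}\subset B_+M=M. \]

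The step that needs the most care is the finite generation of $X^{U^{(1)}}$ over $o_K$: admissibility alone gives only $k_K\otimes_{o_K}X^{U^{(1)}}$ finite-dimensional, and without using that $X$ is bounded $\pi_K$-torsion one cannot reduce to finitely many relations $v_i=\sum_j b_{ij}m_{ij}$, which is what makes the single-$t_+$ argument possible. Once that is in place, the dominance step in $T_+$ is routine; the first assertion is by contrast essentially formal, resting only on the pro-$p$ invariants lemma, the normality of $U^{(1)}$ in $G_0$, and the Iwasawa decomposition $G=G_0B$.
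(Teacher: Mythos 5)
Your proof follows essentially the same route as the paper's: non-vanishing of $X^{U^{(1)}}$ via the pro-$p$ property of $U^{(1)}$, $G$-stability of $BX^{U^{(1)}}$ via normality of $U^{(1)}$ in $G_0$ together with the decomposition $G=G_0B$, and, for the second assertion, a finite set of generators of $X^{U^{(1)}}$ contracted into $M$ by one sufficiently dominant $t_+$, followed by $t_+B_+\subset B_+t_+$. The only real difference is that where the paper simply cites Schneider--Vigneras (Lemma 2.1 of \cite{SchVi}) for the existence of $t_r\in T_+$ with $t_rr\in M$, you prove the contraction by hand from $BM=X$ by writing each generator as a finite sum $\sum_jb_{ij}m_{ij}$ and imposing finitely many lower bounds on the valuation gaps of $t_+$; that argument is correct and makes the step self-contained.

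The one genuine gap is your appeal to the ``bounded $\pi_K$-torsion of $X$''. This is not among the hypotheses (only ``torsion'' is), and your finite-generation argument really needs it: without $\pi_K^NX^{U^{(1)}}=0$ for some $N$, knowing that each $\pi_K^iX^{U^{(1)}}/\pi_K^{i+1}X^{U^{(1)}}$ is a quotient of $X^{U^{(1)}}/\pi_KX^{U^{(1)}}$ proves nothing (for instance $K/o_K$ has all of these quotients zero and is not finitely generated, and a smooth admissible torsion representation need not have bounded torsion in general). Boundedness must be extracted from irreducibility, and this is exactly the paper's opening step: $X[\pi_K]=\{x\in X\mid \pi_Kx=0\}$ is a $G$-stable $o_K$-submodule, nonzero because $X$ is nonzero and torsion, hence equal to $X$, so in fact $\pi_KX=0$. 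Once this is in place your filtration is superfluous: $X^{U^{(1)}}=k_K\otimes_{o_K}X^{U^{(1)}}$ is finite dimensional directly by admissibility, and the rest of your argument goes through verbatim. (A cosmetic point: in the first part use $G=BG_0$, obtained from $G=G_0B$ by taking inverses, so that $gx=b(g_0x)\in BX^{U^{(1)}}$; and note that the displayed inequality defining $T_+$ in the paper has its direction reversed relative to $tN_0t^{-1}\subset N_0$, so ``sufficiently dominant'' should be read accordingly.)
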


\begin{proof}
$X$ is a $\pi_K$ vectorspace as well, because $\pi_KX\leq X$, hence by the irreducibility it is either $0$ or $X$, and since $X$ is torsion $\pi_KX=X$ gives $X=0$.

$BM_0$ is a $B$-subrepresentation, and also a $G_0$-subrepresentation (because $U^{(1)}\lhd G_0$). $G_0B=BG_0=G$, so $BM_0$ is a $G$-subrepresentation of $X$. $M_0$ is not $\{0\}$, since $U^{(1)}$ is pro-$p$ and since $X$ is irreducible $BM_0=X$, hence $M_0$ is generating. And $M_0$ is clearly a $B_+$-submodule of $X$.

$X$ is admissible, hence $X^{U^{(1)}}$ has a finite generating set, say $R$. Let $M$ be as in the proposition. For any $r\in R$ there exists an element $t_r\in T_+$ such that $t_rr\in M$ (\cite{SchVi}, Lemma 2.1). The cardinality of $R$ is finite, hence for $t_+=\prod_{r\in R}t_r$ we have $t_r^{-1}t_+\in T_+$ for all $r\in R$, and then $t_+M_0\subset M$.
\end{proof}

From now on let $V=\mathrm{Ind}^{G}_{B}(\chi)$ as before and $M_0=B_+V^{U^{(1)}}$. Then $V^{U^{(1)}}$ (as a vector space) is generated by
\[f_r:\left\{\begin{array}{ccc} urb & \mapsto & \chi^{-1}(b) \\ y\neq urb & \mapsto & 0 \end{array}\right. \qquad\Bigg(r\in U^{(1)}\setminus G/B=\bigcup_{w\in W}\widetilde{N_w(k_F)}w\Bigg).\]

If we denote the coset $U^{(1)}wB$ also with $w$, then $V^{U^{(1)}}$ is generated by $\{f_w|w\in W\}$ as an $N_0$-module. Hence any $f\in M_0$ can be written in the form $\sum_{i=1}^s\lambda_in_it_if_{w_i}$ for some $\lambda_i\in k_K, n_i\in N_0, t_i\in T_+$ and $w_i\in W$.

\begin{proposition}\label{mingen}
$M_0$ is minimal in $\mathcal{B}_+(V)$.
\end{proposition}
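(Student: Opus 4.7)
The plan is to show that every $M\in\mathcal{B}_+(V)$ contains $M_0$. Since $M_0$ is generated as a $B_+$-module by $\{f_w\}_{w\in W}$, it suffices to prove $f_w\in M$ for every $w\in W$. Fix such an $M$; by the previous proposition there exists $t_+\in T_+$ with $t_+ f_w\in M$ for every $w$.

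I would argue by descending induction on $w$ in the strong Bruhat order $\prec$. The key geometric input is Proposition \ref{rendezes}: since $\mathrm{supp}(f_w)=U^{(1)}wB\subseteq U_w$, any $y\in\mathrm{supp}(t_+f_w)=t_+U^{(1)}wB$ satisfies $t_+^{-1}y\in U_w$, so the proposition applied with $nt=t_+$, $y\in U_{w'}$ and $x=t_+^{-1}y$ forces $w\preceq w'$. Hence $\mathrm{supp}(t_+f_w)\subseteq\bigcup_{w'\succeq w}U_{w'}$, and we may decompose
\[ t_+f_w = h_w + \sum_{w'\succ w} h_{w'}, \qquad h_v := (t_+f_w)|_{U_v}. \]

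For the base case $w=w_0$ the sum is empty, so $t_+f_{w_0}\in V^{(\succeq w_0)}:=\mathrm{Ind}_B^{U_{w_0}}(\chi)$. By the remark preceding the statement, $V^{(\succeq w_0)}$ is the minimal generating $B_+$-subrepresentation of the Steinberg piece $V_{n!-1}$; working in the explicit $C^\infty(N_0)$-model and using that $t_+$ only ``dilates'' the support of $f_{w_0}$ by a character twist, one verifies that the $B_+$-orbit of $t_+f_{w_0}$ already exhausts $V^{(\succeq w_0)}$, hence $f_{w_0}\in M$. For the inductive step, assume $f_{w'}\in M$ for every $w'\succ w$. I would rewrite each $y\in t_+U^{(1)}wB\cap U_{w'}$ in the form $u\widetilde{n}w'b$ of Proposition \ref{uxbdecomposition} and then, by running the valuation bookkeeping of the proof of Proposition \ref{rendezes} backwards, express $h_{w'}$ as a finite $B_+$-combination $\sum_i\lambda_i n_i s_i f_{w'}$ with $n_i\in N_0$ and $s_i\in T_+$. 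By the induction hypothesis such combinations lie in $M$, so $r:=\sum_{w'\succ w}h_{w'}\in M$. A parallel argument writes $h_w$ as a $B_+$-combination of translates of $f_w$ with a unit overall scalar $c_w\in k_K^\times$ (essentially coming from $\chi(w^{-1}t_+w)$); subtracting $r$ from $t_+f_w$ and dividing by $c_w$ then yields $f_w\in M$.

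The main obstacle is the inductive step, specifically the explicit $B_+$-decomposition of each correction $h_{w'}$. The geometric picture --- that $t_+U^{(1)}wB\cap U_{w'}$ is a finite union of slices parametrised by Bruhat data on $R_{w'}$ --- follows readily from Section~3, but obtaining a presentation with $T$-coefficients actually in $T_+$ (rather than merely in $T$) requires the same careful tracking of $\pi_F$-valuations as in the proof of Proposition \ref{rendezes}. The base case is also subtle in its own right, as $t_+f_{w_0}$ is in general not a scalar multiple of $f_{w_0}$ but a fattened $U^{(1)}$-quasi-invariant whose $B_+$-orbit needs to be explicitly seen to sweep out all of $C^\infty(N_0)$.
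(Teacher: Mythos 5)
Your overall skeleton (descending induction along $\prec$, support control via Proposition \ref{rendezes}, unit scalars coming from $\chi(w^{-1}t'w)$) does match the paper, but two steps of your inductive argument have genuine gaps. First, the claim that $h_{w'}=(t_+f_w)|_{U_{w'}}$ equals a finite combination $\sum_i\lambda_i n_i s_i f_{w'}$ is both unproved and, as an identity in $V$, problematic: the translates $n_i s_i f_{w'}$ are \emph{not} supported in $U_{w'}$ when $w'$ is not maximal (by the same Proposition \ref{rendezes} their supports spill into the cells $U_{w''}$ with $w''\succ w'$ — already visible for $\mathrm{GL}_2$, where $t'f_{\mathrm{id}}$ has a nonzero component on the big cell), so at best you get such an identity cell-by-cell, and then $r=\sum_{w'\succ w}h_{w'}\in M$ does not follow; controlling the resulting ``junk'' on higher cells is exactly the hard point. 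Second, the concluding step runs in the wrong direction: knowing $h_w\in M$ and that $h_w$ is a $B_+$-combination of translates of $f_w$ gives $h_w\in B_+f_w$, not $f_w\in M$; and $h_w$ is not $c_wf_w$, since $(t'f_w)|_{U_w}$ is $\chi(w^{-1}t'w)$ times the characteristic function of a strictly smaller congruence subset of $U^{(1)}wB$ (this is the explicit claim inside the paper's proof), so ``dividing by $c_w$'' cannot recover $f_w$. To go the right way you must \emph{average}: sum translates $m\,t'f_w$ over coset representatives $m\in\Theta_{w,j_0}$ of $N_{w,j_0}/t'N_{w,j_0}t'^{-1}$ so that the sum agrees with $\chi(w^{-1}t'w)f_w$ on $U_w$.

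This is precisely what the paper does, and its key device — absent from your proposal — is Lemma \ref{mingenlemma}: the averaged function $g=\sum_{m\in\Theta_{w,j_0}}mt'f_w$ is shown to be $U^{(1)}$-invariant (using that $U^{(1)}$ is generated by $t'U^{(1)}t'^{-1}$, $N_{w,j_0}$ and $N'_{w,j_0}$), whence the error $\chi(w^{-1}t'w)f_w-g$, being $U^{(1)}$-invariant and supported on $\bigcup_{w'\succ w}U_{w'}$, automatically lies in $\sum_{w'\succ w}N_0f_{w'}$; no explicit cell-by-cell re-expansion of the higher-cell components is ever needed. Note also that the paper does not work with an arbitrary $t_+$: it first reduces, via the previous proposition, to showing $M_0\subset B_+t'M_0$ for the monoid generators $t'=\mathrm{diag}(\pi_F,\dots,\pi_F,1,\dots,1)$, which keeps the congruence bookkeeping to depth one; with your general $t_+$ even the corrected averaging step becomes substantially heavier. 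As it stands, your proof would need the $U^{(1)}$-invariance mechanism (or an equivalent way to certify that the higher-cell error is an $N_0$-combination of the $f_{w'}$, $w'\succ w$) to close the gap.
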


\begin{remark}
In \cite{SchVi} section 12 Schneider and Vigneras treated the case of the subquotients $V_{m-1}/V_m$. Unfortunately $M_0$ does not generally give the minimal generating $B_+$-subrepresentation of $V_{m-1}/V_m$ on this subqoutient, since that their method does not work on the whole $V$. It is not true even for $\mathrm{GL}_3(\mathbb{Q}_p)$: an explicit example is shown in Corollary \ref{M0tulnagy}.
\end{remark}

\begin{proof}
By the previous proposition, it is enough to show, that for any $t'\in T_+$ we have $M_0\subset B_+t'M_0$.

If $t'\in G_0$, then $t'^{-1}\in T_+$ thus we have $B_+t'=B_+$, and \break $B_+t'M_0=B_+M_0=M_0$. The same is true for central elements $t'\in Z(G)$. So it is enough to prove for $t'=(\pi_F,\pi_F,\dots,\pi_F,1,1,\dots,1)$ that \break $M_0\subset B_+t'M_0$.

Let $j_0\in\mathbb{N}$ be such that $t'_{j_0}=\pi_F$ and $t'_{j_0+1}=1$. We need to show, that for all $w\in W$ we have $f_w\in B_+t'M_0$. We prove it by descending induction on $w$ with respect to $\prec$.

Let us denote $N^{(1)}_{j_0}=\{n\in N\cap U^{(1)}|\forall i<j, (j_0-i)(j-j_0)<0: n_{ij}=0\}$, $N_{w,j_0}=N_w\cap N^{(1)}_{j_0}$ and
\[\Theta_{w,j_0}=\{\mathrm{a~set~of~representatives~of~}N_{w,j_0}/t'N_{w,j_0}t'^{-1}\}\subset N_0\cap U^{(1)}.\]

It is enough to prove the following:

\begin{lemma}
\label{mingenlemma}
Let $g=\sum_{m\in\Theta_{w,j_0}}mt'f_w$. Then $\chi(w^{-1}t'w)f_w-g$ is in \break $\sum_{w':w\prec w'}N_0f_{w'}$.
\end{lemma}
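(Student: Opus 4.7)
My strategy is to evaluate both sides at each $y \in G$, using the partition $G = \bigsqcup_{w'\in W}U_{w'}$ from Section~3. By Proposition~\ref{rendezes} applied to each $mt'\in B_+$, if $t'^{-1}m^{-1}y$ lies in the support $U^{(1)}wB \subset U_w$ of $f_w$, then $y \in U_{w''}$ for some $w''\succeq w$. Hence both $mt'f_w$ and $f_w$ itself vanish outside $\bigcup_{w''\succeq w}U_{w''}$, reducing the analysis of $\chi(w^{-1}t'w)f_w - g$ to this locus.

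The key step is showing $g|_{U_w} = \chi(w^{-1}t'w)\,f_w|_{U_w}$. Decomposing $U_w = \bigsqcup_{\widetilde{n}\in\widetilde{N_w(k_F)}} U^{(1)}\widetilde{n}wB$, for $y = u\widetilde{n}wb$ one writes
\[
 t'^{-1}m^{-1}y \;=\; \bigl(t'^{-1}m^{-1}u\widetilde{n}\,t'\bigr)\,w\,(w^{-1}t'^{-1}w)\,b,
\]
so $(mt'f_w)(y)$ is nonzero precisely when $m' := t'^{-1}m^{-1}u\widetilde{n}t' \in U^{(1)}$, and then equals $\chi(w^{-1}t'w)\chi^{-1}(b)$. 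Conjugation by $t'^{-1}$ expands by $\pi^{-1}$ exactly the upper-triangular $(i,j)$-entries with $i\le j_0 < j$---the very positions where $N^{(1)}_{j_0}$-elements are forced to vanish---so $N_{w,j_0}/t'N_{w,j_0}t'^{-1}$ parametrises exactly the $N_0$-directions in which $t'^{-1}$-conjugation can push an element outside $U^{(1)}$. An entry-by-entry analysis---parallel to the inductive column-by-column argument in the proof of Proposition~\ref{rendezes}---then shows that for $\widetilde{n}=\mathrm{id}$ exactly one $m\in\Theta_{w,j_0}$ satisfies $m'\in U^{(1)}$, giving $g(y) = \chi(w^{-1}t'w)f_w(y)$; and for $\widetilde{n}\ne\mathrm{id}$ no $m$ does, giving $g(y) = 0 = f_w(y)$.

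Then $h := \chi(w^{-1}t'w)f_w - g$ is supported on $\bigcup_{w'\succ w}U_{w'}$ and is $U^{(1)}$-invariant (by re-indexing the sum defining $g$, using normality of $U^{(1)}$ in $G_0$). Since the $U^{(1)}$-invariants supported on a single double coset $U^{(1)}\widetilde{n'}w'B$ are spanned by $f_{\widetilde{n'}w'}$, and since $\widetilde{n'}f_{w'} = f_{\widetilde{n'}w'}$ (both are $U^{(1)}$-invariant, supported on $U^{(1)}\widetilde{n'}w'B$, and take value $\chi^{-1}(b)$ at $\widetilde{n'}w'b$), we conclude $h \in \sum_{w'\succ w}N_0 f_{w'}$. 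The main obstacle is the entry-by-entry analysis on $U_w$, which depends delicately on the interplay between the contracting pattern of $t'$ and the vanishing pattern of $N^{(1)}_{j_0}$, closely paralleling the matrix calculations in the proof of Proposition~\ref{rendezes}.
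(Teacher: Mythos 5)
Your overall architecture is the same as the paper's (control supports via Proposition~\ref{rendezes}, prove the identity on $U_w$, show the difference is $U^{(1)}$-invariant, conclude from the fact that $U^{(1)}$-invariant functions supported on $\bigcup_{w'\succ w}U_{w'}$ lie in $\sum_{w'\succ w}N_0f_{w'}$), but two steps have real problems. First, your nonvanishing criterion on $U_w$ is wrong as stated: $(mt'f_w)(y)\neq 0$ iff $m'w(w^{-1}t'^{-1}w)b\in U^{(1)}wB$, i.e.\ iff $m'\in U^{(1)}wBw^{-1}$, not iff $m'\in U^{(1)}$. The components of $m'$ that $t'^{-1}$-conjugation pushes out of $U^{(1)}$ in the $N'_w$-directions land in $wBw^{-1}$ and are harmless: already for $w=\mathrm{id}$ one computes $(t'f_{\mathrm{id}})(ub)=\chi(t')\chi^{-1}(b)\neq0$ for every $u\in U^{(1)}$ and $b\in B$, even though $t'^{-1}ut'\notin U^{(1)}$ for many such $u$, so the literal dichotomy ``exactly one $m$ with $m'\in U^{(1)}$ when $\widetilde n=\mathrm{id}$, none when $\widetilde n\neq\mathrm{id}$'' fails. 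The corrected statement is exactly the delicate valuation computation that the paper carries out by running the column algorithm of Proposition~\ref{rendezes}; since you, like the paper, defer to that algorithm, this part is repairable, but as written it rests on a false premise. (Also, your parenthetical is backwards: the entries with $i\le j_0<j$ are precisely where elements of $N^{(1)}_{j_0}$ are allowed to be nonzero, not where they are forced to vanish.)

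The serious gap is the $U^{(1)}$-invariance of $g$ (equivalently of $h$), which you dismiss ``by re-indexing the sum defining $g$, using normality of $U^{(1)}$ in $G_0$''. For $x\in U^{(1)}$ one has $xmt'f_w=mt'\bigl(t'^{-1}m^{-1}xm\,t'\bigr)f_w$; normality only gives $m^{-1}xm\in U^{(1)}$, and after conjugation by $t'^{-1}$ the straddling upper entries are divided by $\pi_F$, so $t'^{-1}m^{-1}xm\,t'$ generally leaves $U^{(1)}$ and need not fix $f_w$; re-indexing over $\Theta_{w,j_0}$ repairs this only in the $N_{w,j_0}$-directions. This is exactly where the paper does its real work: it proves invariance of $g$ separately under $t'U^{(1)}t'^{-1}$ (automatic), under $N_{w,j_0}$ (by rewriting $g=t'\sum_{n\in t'^{-1}N_{w,j_0}t'/N_{w,j_0}}nf_w$, which is the honest form of your re-indexing), and under $N'_{w,j_0}$ (using that $N_{w,j_0}$ centralizes $t'^{-1}N'_{w,j_0}t'$ and that $w^{-1}N'_{w,j_0}w\subset B$ makes $f_w$ invariant under $t'^{-1}N'_{w,j_0}t'$), and finally invokes the generation statement $U^{(1)}\subseteq\langle t'U^{(1)}t'^{-1},N_{w,j_0},N'_{w,j_0}\rangle$. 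None of these ingredients --- the centralizer computation, the $t'^{-1}N'_{w,j_0}t'$-invariance of $f_w$, or the generation fact --- appears in your proposal, and without them the invariance claim, which is the heart of the lemma, is unsupported.
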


We claim that for $r\in R_w$ we have
\[t'f_w(r)=\left\{\begin{array}{ll}\chi(w^{-1}t'w), &\mathrm{if~} \forall i\leq j_0<j, w^{-1}(i)>w^{-1}(j): r_{ij}\in \pi_F^2o_F,\\ 0, & \mathrm{otherwise}. \end{array}\right.\] 

$t'f_w(r)=f(t'^{-1}r)$ is nonzero if and only if $t'^{-1}r\in U^{(1)}wB$. Following the proof of Proposition \ref{rendezes}, it is equivalent to that for all $1\leq j\leq n$ we have $w=w^{(j)}$ and that the first $j$ column of $(t^{(j)})^{-1}r^{(j)}$ is as the first $j$ column of $U^{(1)}w$. This holds if and only if $r_{ij}\in\pi_F^2o_F$ for all $i$ and $j$ as above. Then we have $r^{(n)}=t'^{-1}rw^{-1}t'w$ and $b^{(n)}=w^{-1}(t')^{-1}w$, hence our claim.

Therefore $\chi(w^{-1}t'w)f_w|_{U_w}=\sum_{m\in\Theta_{w,j_0}}mt'f_w|_{U_w}$. Hence by the induction hypothesis and Proposition \ref{rendezes} it suffices to prove that $g$ is $U^{(1)}$-invariant.

To do that, first notice that since $f_w$ is $U^{(1)}$-invariant, we have that $t'f_w$ is $t'U^{(1)}t'^{-1}$-invariant. Moreover, since for all $m\in\Theta_{w,j_0}$ we have \break $m\in N_0\cap U^{(1)}\subseteq t'N_0t'^{-1}$, $m$ normalizes $t'U^{(1)}t'^{-1}$, $mt'f_w$ is also $t'U^{(1)}t'^{-1}$-invariant, and so is $g$.

On the other hand, we can write
\[g=\sum_{m\in\Theta_{w,j_0}}mt'f_w=\sum_{m\in\Theta_{w,j_0}}t'(t'^{-1}mt')f_w=t'\Bigg(\sum_{n\in t'^{-1}N_{w,j_0}t'/N_{w,j_0}}nf_w\Bigg),\]
where the sum in the bracket on the right hand side is obviously $t'^{-1}N_{w,j_0}t'$-invariant, hence $g$ is $N_{w,j_0}$-invariant.

Denote $N'_{w,j_0}=N'_w\cap N^{(1)}_{j_0}$. Then $N_{w,j_0}$ centralizes $t'^{-1}N'_{w,j_0}t'$: let \break $n_0=\mathrm{id}+m_0\in t'^{-1}N'_{w,j_0}t'$, $n\in N_{w,j_0}$,
\[(n^{-1}n_0n-n_0)_{xy}=(n^{-1}m_0n-m_0)_{xy}=\sum_{x\leq s\leq t\leq y}(n^{-1})_{xs}(m_0)_{st}n_{ty}-(m_0)_{xy},\]
and by the definition $N^{(1)}_{j_0}$, $(m_0)_{st}$ is 0, unless $s\leq j_0\leq t$ and hence \break $(n^{-1})_{xs}m_{st}n_{ty}=0$, unless $x=s$ and $y=t$.

By the definiton of $N'_w$ we have $w^{-1}N'_{w,j_0}w\subset B$, so for any $u\in U^{(1)}$ and $n_0\in t'^{-1}N'_{w,j_0}t'\subset G_0$ we have $n_0uw=(n_0un_0^{-1})w(w^{-1}n_0w)\in U^{(1)}wB$, and hence $f_w$ is $t'^{-1}N'_{w,j_0}t'$-invariant.

Altogether for any representative $n\in\Theta_{w,j_0}$
\[nf_w(n_0x)=f_w(n^{-1}n_0x)=f_w(n_0n^{-1}x)=f_w(n^{-1}x)=nf_w(x),\]
meaning that $nf_w$ is $t'^{-1}N'_{w,j_0}t'$-invariant, and $t'nf_w$ is $N'_{w,j_0}$-invariant. So $g$ is also $N'_{w,j_0}$-invariant. 

$U^{(1)}$ is contained in $\big<t'U^{(1)}t'^{-1}, N_{w,j_0}, N'_{w,j_0}\big>$, so $g$ is $U^{(1)}$-invariant, and we are done.
\end{proof}

\begin{corollary}
For any $f\in M_0$ there exists $t\in T_+$ such that $f$ can be written in form $\sum_{i=1}^s\lambda_in_itf_{w_i}$ for some $\lambda_i\in k_K, n_i\in N_0$ and $w_i\in W$.
\end{corollary}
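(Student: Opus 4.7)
The plan is as follows. By definition $M_0 = B_+ V^{U^{(1)}}$, so any $f \in M_0$ admits a presentation $f = \sum_{i=1}^s \lambda_i n_i t_i f_{w_i}$ with possibly distinct $t_i \in T_+$; the task is to unify these. Since $T_+$ is a commutative monoid, I would choose a common upper bound $t \in T_+$ with $t = t_i s_i$, $s_i \in T_+$ (for instance $t = \prod_i t_i$), and reduce everything to the following claim: for every $t_0, t \in T_+$ with $t_0 \leq t$ and every $w \in W$, the element $t_0 f_w$ can be written as $\sum_j \mu_j m_j\, t f_{w_j}$ with $\mu_j \in k_K$, $m_j \in N_0$ and $w_j \in W$.

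The key input is Lemma \ref{mingenlemma}. Multiplying its conclusion through by $t_0$ and using $t_0 N_0 t_0^{-1} \subset N_0$ yields, for any $1 \leq l < n$ and $t' = t'_l = \mathrm{diag}(\pi_F, \dots, \pi_F, 1, \dots, 1)$ (with $l$ copies of $\pi_F$), the identity
\[ t_0 f_w = \chi(w^{-1} t'_l w)^{-1} \sum_{m \in \Theta_{w,l}} (t_0 m t_0^{-1})\, (t_0 t'_l) f_w \;+\; \sum_{w' \succ w} n'_{w'}\, t_0 f_{w'}, \qquad n'_{w'} \in N_0. \]
The first sum carries $t_0 t'_l$ in place of $t_0$, bringing us ``closer'' to $t$; the second keeps $t_0$ but moves $w$ up in the Bruhat order.

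I would prove the claim by double induction, outer on a distance $d(t_0, t) = \nu_F((t/t_0)_1) - \nu_F((t/t_0)_n) \in \mathbb{N}$ (non-negative because $t/t_0 \in T_+$), and inner on $w$ in descending Bruhat order. The invertible generators of $T_+$, namely $T \cap G_0$ and $Z(G)$, act on each $f_w$ by a scalar in $k_K^*$ via $\chi$, so they are invisible to $d$: when $d = 0$, $t_0 f_w$ is already a scalar multiple of $t f_w$, settling the outer base case. When $d > 0$, I would choose $l$ with $\nu_F((t/t_0)_l) > \nu_F((t/t_0)_{l+1})$ (which exists since a non-constant non-increasing sequence has a strict descent); then $t/(t_0 t'_l) \in T_+$ and $d(t_0 t'_l, t) = d - 1$, so the outer induction rewrites the first sum in the displayed identity, while the inner induction handles the second, with $w = w_0$ as the inner base case (the second sum being empty there).

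The main obstacle will be the bookkeeping to make both parameters decrease coherently, together with verifying that the required index $l$ always exists so each reduction step can be carried out and stays within $T_+$. A more naive approach lifting each $t_i$ to $t$ via Proposition \ref{mingen} directly would only give $f \in B_+ t M_0$, reintroducing fresh uncontrolled $T_+$-factors on the $V^{U^{(1)}}$-side; it is the descending induction on the Bruhat order on $W$ that allows the procedure to terminate.
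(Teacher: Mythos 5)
Your argument is correct and is essentially the paper's intended one: the corollary is stated there without proof as a direct consequence of Lemma \ref{mingenlemma}, and your double induction (outer on the $T_+$-distance from $t_0$ to $t$, with the invertible part of $T_+$ acting through $\chi$ by nonzero scalars, inner descending on the Bruhat order for the error terms, and $t_0N_0t_0^{-1}\subseteq N_0$ used to merge the $N_0$-factors) is exactly the expected unwinding of that lemma. The only cosmetic point is that the error term $\sum_{w'\succ w}n'_{w'}t_0f_{w'}$ should be read as a finite $k_K$-linear combination of $N_0$-translates of the $t_0f_{w'}$, which changes nothing in the induction.
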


~\\

Define the $k_K[B_+]$-submodules $M_{0,m}=\sum_{m'>m} B_+f_{w_{m'}}\leq\mathrm{Ind}^{G_m}_{B}(\chi)$. We obtain a descending filtration $M_0=M_{0,0}\geq M_{0,1}\geq\dots\geq M_{0,n!}=0$. Then $M_{0,n!-1}=\mathrm{Ind}^{U_{w_0}}_B(\chi)$ is the minimal generating subrepresentation of $V_{n!-1}$.

\begin{proposition}
Let $1<m\leq n!$, $w=w_{m-1}$ and $n'\in N'_{0,w}=N'_w\cap N_0$ and $t\in T_+$. Then $g=n'tf_w-nf_w\in M_{0,m}$.
\end{proposition}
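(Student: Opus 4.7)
The plan is to verify the containment in two stages: first that $g$ vanishes pointwise on $G_m=\bigcup_{l'\le m}Bw_{l'}B$ (a necessary condition since $M_{0,m}\subseteq V_m$), and then to express $g$ explicitly as a finite $B_+$-combination of the $f_{w_l}$ with $l>m$.

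The starting observation I would use is that $n'$ acts trivially on $f_w$ itself. For $y=uwb\in U^{(1)}wB=\operatorname{supp}(f_w)$ one has
\[
n'^{-1}y = (n'^{-1}un')\cdot w\cdot(w^{-1}n'^{-1}w)\cdot b,
\]
where $n'^{-1}un'\in U^{(1)}$ (since $n'\in G_0$ normalizes $U^{(1)}$) and $w^{-1}n'^{-1}w\in N\subseteq B$ by the definition $N'_w=N\cap wNw^{-1}$. As $\chi|_N\equiv 1$ this gives $(n'f_w)(y)=\chi^{-1}(b)=f_w(y)$, so $n'f_w=f_w$. Consequently $n'tf_w = t\cdot n_*\cdot f_w$ with $n_*:=t^{-1}n't\in N'_w$, and the task reduces to controlling the failure of $n_*$ to preserve $f_w$.

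The main difficulty is that $n_*$ typically leaves $N_0$: for $i<j$ the entry $(n_*)_{ij}=(t_j/t_i)(n')_{ij}$ has $\nu_F(t_j/t_i)\le 0$ when $t\in T_+$, so entries of $n_*$ may have strictly negative valuation, and the clean rewrite above breaks down. I would then proceed pointwise: for $x\in Bw_{l'}B\subseteq G_m$, apply Proposition \ref{rendezes} to both $t^{-1}x$ and $t^{-1}n'^{-1}x$ to produce their $R_{w'}B$-decompositions, each with $w'\preceq w_{l'}$. Because $\prec_T$ refines Bruhat and $w=w_{m-1}$, the function $f_w$ can contribute only when $w'=w$; on that stratum a matrix computation modelled on the proof of Proposition \ref{rendezes} — tracking how the negative-valuation entries of $n_*$ alter the column reduction — shows that the two $\chi$-values returned are equal throughout $G_m$, giving $g|_{G_m}\equiv 0$.

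For the sharper containment $g\in M_{0,m}$ (rather than merely $g\in V_m$), the plan is to expand the residual contributions of $n_*$ column by column in the spirit of Proposition \ref{rendezes}'s proof, producing a finite sum $\sum_{w'\succ w} n_{w'}b_{w'}f_{w'}$ with $n_{w'}\in N_0$, $b_{w'}\in B_+$. Since $\prec_T$ refines the Bruhat order and $w=w_{m-1}$, each such $w'$ is some $w_l$ with $l>m$, placing the sum in $M_{0,m}$. The main obstacle is precisely this explicit bookkeeping: an inductive column-by-column argument, parallel to the intricate reduction in the proof of Proposition \ref{rendezes}, in which the positions of the offending negative-valuation entries of $n_*$ play the role of the indices $j_i$ and $i_0$ from that proof, and where Lemma \ref{mingenlemma} supplies the averaging identity needed to absorb the $\chi(w^{-1}tw)$-type scalars into $B_+$-translates of higher $f_{w_l}$.
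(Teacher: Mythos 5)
Your opening reduction is fine and relevant: indeed $n'f_w=f_w$, so $g=t\bigl(n_*f_w-f_w\bigr)$ with $n_*=t^{-1}n't\in N'_w$ possibly non-integral, and that is where the difficulty sits. But past that point the proposal is a plan rather than a proof: both of your stages hinge on assertions that are never carried out (``a matrix computation modelled on the proof of Proposition \ref{rendezes} \dots shows that the two $\chi$-values returned are equal throughout $G_m$'', and the ``inductive column-by-column argument'' with Lemma \ref{mingenlemma} for stage 2), and the first assertion is actually false. What one can prove -- and all that the paper claims pointwise -- is that $g$ vanishes on $\bigcup_{w'\preceq w}U_{w'}$, hence on $G_{m-1}$; on the next cell $Bw_mB$ the two functions genuinely differ. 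Concretely, take $G=\mathrm{GL}_2(\mathbb{Q}_p)$, $m=2$, $w=w_1=\mathrm{id}$, $t=\mathrm{diag}(p,1)$, $n'=\left(\begin{smallmatrix}1&1\\0&1\end{smallmatrix}\right)$, and $x_1=\left(\begin{smallmatrix}1&0\\1&1\end{smallmatrix}\right)\in Bw_0B\subset G_m$: then $tf_w(x_1)=\chi_1(p)$, while $t^{-1}n'^{-1}x_1=\left(\begin{smallmatrix}0&-1/p\\1&1\end{smallmatrix}\right)$ has vanishing $(1,1)$-entry, so it lies outside $U^{(1)}B$ and $n'tf_w(x_1)=0$; hence $g(x_1)=-\chi_1(p)\neq0$. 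So $g$ does not vanish on $G_m$, your stage 1 cannot be completed, and stage 2 falls with it. The same computation shows the printed indices are off by one: here $M_{0,m}=M_{0,n!}=0$ while $g\neq0$, although $g$ is supported in $U_{w_0}$ and so does lie in $M_{0,m-1}=\mathrm{Ind}^{U_{w_0}}_B(\chi)$. The provable statement -- the one the paper's own proof actually aims at (its final inclusion reads $\sum_{m'>m-1}tf_{w_{m'}}$) and the one the subsequent corollary uses -- is $g\in M_{0,m-1}$, equivalently the statement with $w=w_m$; by taking the printed form at face value you committed yourself to proving something false.

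Your route also diverges from the paper's mechanism. The paper does no cell-by-cell matrix work on $G_m$ and never invokes Lemma \ref{mingenlemma} here; it combines (i) the support statement from Proposition \ref{rendezes} (both $tf_w$ and $n'tf_w$ vanish on every $U_{w'}$ with $w'\not\succeq w$), (ii) the equality $n'tf_w|_{U_w}=tf_w|_{U_w}$ on the single stratum $U_w$, and (iii) an invariance claim for $g$ under a conjugate of $U^{(1)}$, used to expand $g$ in the basis $\{tf_r\}$ of $t\,V^{U^{(1)}}$ and to read off that only $f_{w_{m'}}$ with $m'>m-1$ occur. If you rewrite, aim at the $M_{0,m-1}$ form and make step (iii) honest, i.e.\ identify an open subgroup fixing $g$ and show directly that no basis function $t\nu f_{w'}$ with $w'\preceq_T w$ can appear (equivalently $g(t\nu w')=0$ for such $w'$); be warned that the invariance asserted in the paper also needs care -- in the example above $g$ is not fixed by $\left(\begin{smallmatrix}1&0\\-1&1\end{smallmatrix}\right)\in tU^{(1)}t^{-1}$ -- so a smaller subgroup or an averaging in the spirit of Lemma \ref{mingenlemma} is likely required. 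Your stage-2 sketch, which proposes to re-run the entire column reduction of Proposition \ref{rendezes} with the negative-valuation entries of $n_*$ as new pivots, is not developed far enough to assess and is in any case not the argument the paper uses.
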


\begin{proof}
For $w'\prec w$ we have $tf_w|_{U_{w'}}=n'tf_w|_{U_{w'}}=0$ and following the proof of Proposition \ref{rendezes} we get $n'tf_w|_{U_w}=tf_w|_{U_w}$. Moreover $g$ is $tU^{(1)}t^{-1}$-invariant, thus it is contained in $\sum_{m'>m-1}tf_{w_{m'}}\subset M_{0,m}$.
\end{proof}

\begin{corollary}
For any $f\in M_0$ there exists $t\in T_+$ such that $f$ can be written in form $\sum_{i=1}^s\lambda_in_itf_{w_i}$ for some $\lambda_i\in k_K$, $w_i\in W$ and $n_i\in N_{0,w_i}$.
\end{corollary}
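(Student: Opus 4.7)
The plan is to start from the decomposition supplied by the previous corollary, $f = \sum_{i=1}^{s}\lambda_{i} n_{i} t f_{w_{i}}$ with a common $t \in T_{+}$, $\lambda_{i}\in k_{K}$, $n_{i}\in N_{0}$, and $w_{i}\in W$, and then to refine each summand in turn. For every $i$, the totally decomposed structure $N_{0}=N_{0,w_{i}}\cdot(N'_{w_{i}}\cap N_{0})$ yields a unique factorisation $n_{i}=n'_{i}n''_{i}$ with $n'_{i}\in N_{0,w_{i}}$ and $n''_{i}\in N'_{0,w_{i}}$. Applying the previous proposition to $n''_{i}$ gives $n''_{i}tf_{w_{i}} = tf_{w_{i}}+g_{i}$ with $g_{i}\in M_{0,m_{i}}$, where $w_{i}=w_{m_{i}-1}$. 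Hence
\[
\lambda_{i}n_{i}tf_{w_{i}} \;=\; \lambda_{i}n'_{i}tf_{w_{i}} \;+\; \lambda_{i}n'_{i}g_{i},
\]
and the first piece is already of the desired form.

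The key technical point, which must be extracted from the proof of the previous proposition, is that $g_{i}$ can in fact be written explicitly as $\sum_{m'\geq m_{i}}\mu_{i,m'}\,tf_{w_{m'}}$ with the \emph{same} torus element $t$: indeed, $g_{i}$ is $tU^{(1)}t^{-1}$-invariant and vanishes on $U_{w'}$ for every $w'\preceq w_{i}$, which pins it down as a $k_{K}$-linear combination of the $tf_{w_{m'}}$ for $m'\geq m_{i}$. Consequently the correction $\lambda_{i}n'_{i}g_{i}$ expands into a sum of terms $\mu\, n'_{i}\,tf_{w_{m'}}$ indexed by $w_{m'}\succ_{T}w_{i}$ strictly. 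I then iterate the split-and-substitute procedure on any newly produced summand whose $N_{0}$-coefficient still fails to lie in the appropriate $N_{0,w_{m'}}$. At each iteration the minimum $\prec_{T}$-index among the bad summands strictly increases; since $|W|=n!$ is finite, the process terminates after at most $n!$ rounds, with the same $t$ preserved throughout.

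The main obstacle is precisely this preservation of a common $t$: if the correction $g_{i}$ had to be re-expanded by a fresh invocation of the previous corollary at each step, one would be forced to juggle several torus elements and reconcile them at the end. The explicit description of $g_{i}$ stated above sidesteps this difficulty entirely, so the descending induction on $\prec_{T}$ closes at once and yields the desired common-$t$ decomposition with $n_{i}\in N_{0,w_{i}}$.
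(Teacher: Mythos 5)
Your route is the one the paper intends (the corollary is left without an explicit proof there): start from the common-$t$ expression of the previous corollary, factor $n_i=n_i'n_i''$ through $N_0=N_{0,w_i}(N'_{w_i}\cap N_0)$, replace $n_i''tf_{w_i}$ by $tf_{w_i}$ at the cost of a correction term living at strictly larger Weyl elements with the \emph{same} $t$, and close a descending induction that terminates because $N_{0,w_0}=N_0$. One step, however, is misstated: $tU^{(1)}t^{-1}$-invariance only puts $g_i$ into $V^{tU^{(1)}t^{-1}}=t\,V^{U^{(1)}}$, whose $k_K$-basis is the set of \emph{all} $tf_r$ with $r\in\bigcup_{w'}\widetilde{N_{w'}(k_F)}w'$, not just the $tf_{w'}$ with $r=w'$; since $tf_r=(t\tilde nt^{-1})\,tf_{w'}$ for $r=\tilde nw'$, what invariance plus the vanishing on $\bigcup_{w'\preceq w_i}U_{w'}$ (via Proposition \ref{rendezes}, exactly as in the proof of the preceding proposition) actually gives is $g_i\in\sum_{w'\succ w_i}k_K[N_0]\,tf_{w'}$ --- an $N_0$-span with the same $t$, not a plain $k_K$-linear combination of the $tf_{w_{m'}}$. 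Your stronger claim is already false for $\mathrm{GL}_2$ with $w_i=\mathrm{id}$ and $n_i''=\bigl(\begin{smallmatrix}1&u\\0&1\end{smallmatrix}\bigr)$, $u\in o_F^*$: there $g_i$ is nonzero at points of $N_0w_0B$ outside $tU^{(1)}w_0B$, so it is not proportional to $tf_{w_0}$ but is a combination of several $N_0$-translates of it. The misstatement is harmless for your scheme: the new summands are $\mu\,n\,tf_{w'}$ with some $n\in N_0$ (not necessarily your $n_i'$) and the same $t$, indexed by $w'\succ w_i$, so their $N_0$-coefficients are simply processed in later rounds and the minimal bad $\prec_T$-index still strictly increases. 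With the ``explicit description'' of $g_i$ corrected to the $N_0$-span statement above --- which is precisely what the proof of the preceding proposition establishes --- your argument is correct and coincides with the paper's intended derivation.
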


\begin{remarks}
\begin{enumerate}
\item $V$ is the modulo $\pi_K$ reduction of the $p$-adic principal series representation. This can be done with any $l\in\mathbb{N}$ for the modulo $\pi_K^l$ reduction. Then the $\pi_K$-torsion part of the minimal generating $B_+$-representation is exactly $M_0$.
\item This can be carried out in the same way for groups $G'$ as in the previous section satisfying moreover $N_0\subset G'$. For example $G'=\mathrm{SL}_n$ has this property (but its center is not connected), or $G'=P$ for arbitary $P\leq G$ parabolic subgroup has also (but these are not reduvtive).
\end{enumerate}
\end{remarks}

\section{The Schneider-Vigneras functor}

Following Schneider and Vigneras (\cite{SchVi}, section 2) we introduce the functor $D$ from torsion $o_K$-modules to modules over the Iwasawa algebra of $N_0$.

Let us denote the completed group ring of $N_0$ over $o_K$ by $\Lambda(N_0)$, and define
\[D(X)=\lim_{\overrightarrow{M\in\mathcal{B}_+(X)}}M^*,\]
as an $\Lambda(N_0)$-module, equipped with the natural $T_+^{-1}$-action $\psi$.

On $D(V)$ the action of $\pi_K$ is 0, hence we can view it as a \break $\Omega(N_0)=\Lambda(N_0)/\pi_K\Lambda(N_0)$-module.

By Proposition \ref{mingen} we have

\begin{proposition}
The $\Omega(N_0)$-module $D(V)$ is equal to $M_0^*$.
\end{proposition}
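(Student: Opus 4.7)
My plan is to upgrade the minimality statement of Proposition \ref{mingen} to the stronger claim that $M_0$ is the unique inclusion-minimum of $\mathcal{B}_+(V)$; once that is known, the inductive limit defining $D(V)$ collapses to $M_0^*$ for purely formal reasons.

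First I will show that $M_0 \subseteq M$ for every $M \in \mathcal{B}_+(V)$. The first proposition of Section 4 provides a $t_+ \in T_+$ with $t_+ M_0 \subseteq M$; since $M$ is itself a $B_+$-submodule, this forces $B_+ t_+ M_0 \subseteq M$. On the other hand, the proof of Proposition \ref{mingen} establishes (via descending induction on the refined Bruhat order $\prec$, with Lemma \ref{mingenlemma} supplying the inductive step) that $M_0 \subseteq B_+ t' M_0$ for every $t' \in T_+$. Combining these two containments with $t' = t_+$ yields $M_0 \subseteq M$, so $M_0$ is the inclusion-minimum of $\mathcal{B}_+(V)$.

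Next I will transfer this minimality to the colimit. The transition maps in the inductive system defining $D(V)$ are the restriction maps $M_2^* \twoheadrightarrow M_1^*$ dual to inclusions $M_1 \hookrightarrow M_2$; since $M_0$ is contained in every $M \in \mathcal{B}_+(V)$, the one-element family $\{M_0\}$ is cofinal in the direction of the colimit. The universal property then identifies $D(V)$ with $M_0^*$ as a $\Lambda(N_0)$-module, compatibly with the natural $\psi$-action coming from the $T_+$-action on $M_0$. Finally, because the values of $\chi$ lie in $k_K^\times$, the induced representation $V$ is a $k_K$-vector space, so $\pi_K$ annihilates $M_0$ and hence acts trivially on $M_0^*$; the $\Lambda(N_0)$-action therefore factors through $\Omega(N_0) = \Lambda(N_0)/\pi_K\Lambda(N_0)$, as required.

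The main (in fact only) obstacle I expect is keeping track of the direction of the colimit against the inclusion order on $\mathcal{B}_+(V)$; the substantive combinatorial work has already been carried out in Proposition \ref{mingen} and Lemma \ref{mingenlemma}, so no new input is required.
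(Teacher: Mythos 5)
Your proposal is correct and follows essentially the same route as the paper, which deduces the proposition directly from Proposition \ref{mingen}: the paper's proof of that proposition establishes exactly your key containment $M_0\subseteq B_+t'M_0$ for all $t'\in T_+$, which together with the existence of $t_+\in T_+$ with $t_+M_0\subseteq M$ gives $M_0\subseteq M$ for every $M\in\mathcal{B}_+(V)$, so that $M_0^*$ is cofinal in the inductive system defining $D(V)$. You merely spell out the cofinality and the $\pi_K$-annihilation steps that the paper leaves implicit, which is harmless.
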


\begin{remarks}
\begin{enumerate}
\item We do not now whether $D(V)$ is finitely generated or it has rank 1 as an $\Omega(N_0)$-module.
\item On $M_0$ we have an action of $U^{(1)}$: if $x\in U^{(1)}$, $n\in N_0,t\in T_+$ and $w\in W$ then we can write $n^{-1}xn=n_1n_2\in U^{(1)}$ with $n_1\in N_0$ and $n_2\in B^-T\cap U^{(1)}$ (with $B^-=N^-T$), thus
\[xntf_w=n(n^{-1}xn)tf_w=(nn_1)t(t^{-1}n_2t)f_w=(nn_1)tf_w\in M_0,\]
since $t^{-1}n_2t\in U^{(1)}$ and $f_w$ is $U^{(1)}$-invariant. Thus on $D(V)$ there is an action of $\Lambda(U^{(1)})$, therefore an action of $\Lambda(I)$ (with $I$ denoting the Iwahori subgroup).
\end{enumerate}
\end{remarks}

Till this point we considered only the $\Lambda(N_0)$-module structure of $D(V)$. Now we shall examine the $\psi$-action as well. We need to get an \'etale module from $D(V)$, thus we examine the $\psi$-invariant images of $D(V)$ in an \'etale module.

Let $D$ be a topologically \'etale (see \cite{SVZ} the first lines of Section 4) $(\varphi,\Gamma)$-module over $\Omega(N_0)$, with the following properties:
\begin{itemize}
\item $D$ is torsion-free as an $\Omega(N_0)$-module,
\item on $D$ the topology is Hausdorff,
\item $D$ has a basis of neighborhoods of 0, containing $\varphi$-invariant $\Omega(N_0)$-submodules ($O\leq D$ open such that $\varphi_t(O)\subseteq O$ for all $t\in T_+$).
\end{itemize}

\begin{theorem}\label{mainthm}
Let $D$ be as above and $F:D(V)\to D$ a continuous $\psi$-invariant map (where $\psi$ is the canonical left inverse of $\varphi$ on $D$). Then $F$ factors through the natural map $F_0:D(V)\to D(V_{n!-1})$: there exists a continuous $\psi$-invariant map $G:D(V_{n!-1})\to D$ such that $F=F_0\circ G$.
\end{theorem}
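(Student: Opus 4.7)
The natural map $F_0:D(V)\to D(V_{n!-1})$, by Proposition~\ref{mingen} applied to the subrepresentation $V_{n!-1}$, is the restriction map $M_0^*\twoheadrightarrow M_{0,n!-1}^*$ dual to the inclusion $M_{0,n!-1}\hookrightarrow M_0$; it is surjective since $M_0$ is a $k_K$-vector space. So $F$ factors as $G\circ F_0$ for some (automatically continuous and $\psi$-equivariant) $G$ if and only if $F$ vanishes on
\[\ker F_0=A:=\{\phi\in M_0^*\mid\phi|_{M_{0,n!-1}}=0\}.\]
The plan is therefore to fix $\phi\in A$ and establish $F(\phi)=0$.

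The key algebraic input is an iterated form of Lemma~\ref{mingenlemma}. Fix a non-central $t'=\mathrm{diag}(\pi_F,\dots,\pi_F,1,\dots,1)\in T_+$; each $M_{0,m}$ is $N_0T_+$-stable, and Lemma~\ref{mingenlemma} gives $f_{w_m}\equiv\chi(w_m^{-1}t'w_m)^{-1}\sum_{n\in\Theta_{w_m,j_0}}nt'f_{w_m}\pmod{M_{0,m}}$. Iterating through the filtration $M_0\supset M_{0,1}\supset\cdots\supset M_{0,n!-1}$ (the iteration terminates because $f_{w_0}\in M_{0,n!-1}$) yields $M_0\subset\Lambda(N_0)t'M_0+M_{0,n!-1}$. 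Using additionally $t'\Lambda(N_0)\subset\Lambda(N_0)t'$ and $t'M_{0,n!-1}\subset M_{0,n!-1}$, induction on $k$ gives
\[M_0\subset\Lambda(N_0)(t')^kM_0+M_{0,n!-1}\qquad\text{for every }k\geq 1.\]

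Dually, for $\phi\in A$ and $m\in M_0$ one writes $m=\sum_in_i(t')^km_i'+e$ with $e\in M_{0,n!-1}$; since $\phi(e)=0$, the defining relations between $\psi_{t'}$ and the $N_0$-action on $M_0^*$ give
\[\phi(m)=\sum_i\phi(n_i(t')^km_i')=\sum_i\psi_{t'}^k(n_i^{-1}\phi)(m_i').\]
Now fix an open $\varphi$-invariant $\Omega(N_0)$-submodule $O\subset D$; by Hausdorff-ness of $D$ it suffices to show $F(\phi)\in O$. Continuity of $F$ supplies a finite $S\subset M_0$ with $S^\perp\subset F^{-1}(O)$, and the displayed identity expresses $\phi|_S$ through $\psi_{t'}^k\phi$ (and its $N_0$-translates) for any $k$. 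Combining this with the $\psi$-equivariance $F\circ\psi_{t'}=\psi_{t'}\circ F$, the étale identity $\psi_{t'}\varphi_{t'}=\mathrm{id}_D$, and the $\varphi_{t'}$-invariance $\varphi_{t'}(O)\subset O$, one propagates the estimate into $F(\phi)\in O$ for $k$ sufficiently large.

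The principal technical obstacle is this final topological step. Converting the pointwise algebraic identity expressing $\phi$ through its $\psi_{t'}^k$-iterates into the module-theoretic conclusion $F(\phi)\in O$ requires delicate interplay between the $\varphi$-invariant neighborhood basis, torsion-freeness of $D$, and the étale decomposition $D=\bigoplus_{n\in N_0/t'N_0t'^{-1}}n\varphi_{t'}(D)$, together with careful bookkeeping of how the $N_0$-action interacts with the iterated $\psi_{t'}$ on the coset representatives $\Theta_{w,j_0}$ from Lemma~\ref{mingenlemma}.
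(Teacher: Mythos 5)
Your reduction of the theorem to showing $F(\phi)=0$ for every $\phi\in A=\{\phi\in M_0^*\mid\phi|_{M_{0,n!-1}}=0\}$ is the right target, and the dual identity $\phi(m)=\sum_i\psi_{t'}^k(n_i^{-1}\phi)(m_i')$ does follow from your iterated form of Lemma~\ref{mingenlemma}. But the proof stops exactly where the theorem lives: the ``final topological step'' you defer is the entire content, and as sketched it cannot be pushed through. To conclude $F(\phi)\in O$ you must ultimately use the \'etale reconstruction $F(\phi)=\sum_{u}u\varphi_{t'}^k\bigl(F(\psi_{t'}^k(u^{-1}\phi))\bigr)$ and show each term lies in $O$; by continuity this requires $\psi_{t'}^k(u^{-1}\phi)$ to annihilate the finite set $S$ determining the neighborhood, i.e.\ $\phi(u(t')^ks)=0$ for $s\in S$. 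For a general $\phi\in A$ this fails for every $k$: take $s=f_{w_1}$ and $\phi$ the evaluation at the identity (composed with an embedding $k_K\hookrightarrow K/o_K$), which vanishes on all of $M_0\cap V_{n!-1}$ and hence lies in $A$; then $\psi_{t'}^k(u^{-1}\phi)(f_{w_1})=f_{w_1}((t')^{-k}u^{-1})=\chi((t')^k)\neq0$ for all $k$ and all $u\in N_0$, because $u(t')^kf_{w_1}$ stays supported off $Bw_0B$. So the $\psi$-iterates of such $\phi$ never become small in $D(V)$, and no choice of $k$, together with continuity and the $\varphi$-invariant neighborhood basis, makes your propagation work uniformly on $A$.

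This is precisely why the paper argues in two stages rather than with one propagation. It first isolates the degenerate part of $A$: it proves that $M_0/(M_0\cap V_{n!-1})$ has no nontrivial $k_K[N_0]$-divisible elements (divisibility by $[n']-[\mathrm{id}]$ with $n'\in N'_{0,w_m}$ fails at any point of the support off $Bw_0B$), deduces that $(M_0/(M_0\cap V_{n!-1}))^*$ is contained in the closure of the torsion submodule of $D(V)$, and kills it using that $D$ is $\Omega(N_0)$-torsion-free and Hausdorff. Only after passing to $(M_0\cap V_{n!-1})^*$ does the continuity/$\psi$ argument apply, and there a single well-chosen $t\in T_+$ per neighborhood $O$ suffices, because $\bigcup_{t\in T_+}t^{-1}M_{0,n!-1}=V_{n!-1}$ exhausts the domain, so functionals killing $t^{-1}M_{0,n!-1}$ land in $F^{-1}(O)$ and then $F(f)=\sum_u u\varphi_t(F(\psi_t(u^{-1}f)))\in O$, whence $F(f)=0$; finally $D(V_{n!-1})=M_{0,n!-1}^*$ by \cite{SchVi}, Proposition 12.1. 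Your iterated inclusion $M_0\subset\Lambda(N_0)(t')^kM_0+M_{0,n!-1}$ is plausible but is not a substitute for this torsion/divisibility step, which is the missing idea; torsion-freeness of $D$ cannot remain a bookkeeping footnote, it must carry the part of $\ker F_0$ that the $\psi$-argument cannot see.
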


\begin{proof}
$\overline{D(V)-tors}$ is in the kernel of $F$ (the torsion submodules exist, because the rings are Ore rings).

In $M_0/(M_0\cap V_{n!-1})$ there are no nontrivial $k_K[N_0]$-divisible elements, because if $f\in M_0$ the image of it in $M_0/(M_0\cap V_{n!-1})$ is $f'=f|_{G\setminus Bw_0B}$. Assume by contradiction that $f'$ is $k_K[N_0]$-divisible. If it is nontrivial, then there exists $bw_mb\in G$ such that $f(bw_mb)\neq0$ with some $m<n!$ Let \break $n'\in N'_{0,w_m}=N_0\cap w_mN_0w_m^{-1}$ with $n'\neq\mathrm{id}$, and $[n']-[\mathrm{id}]\in k_K[N_0]$. Then for any $g\in M_0$ we have
\[([n']-[\mathrm{id}])g(w_m)=g(n'^{-1}w_m)-g(w_m)=g(w_m(w_m^{-1}n'^{-1}w_m))-g(w_m)=0,\]
because $w_m^{-1}n'^{-1}w_m\in N$. Thus $f'$ is not divisible by $[n']-[\mathrm{id}]$.

It follows that $F$ factors through $(M_0\cap V_{n!-1})^*$: The fact that there are no nontrivial divisible submodules in $M_0/(M_0\cap V_{n!-1})$  implies that for any (closed) submodule the maps $f\mapsto\lambda f$ are not surjective for all \break $\lambda\in k_K[N_0]^*$. Hence dual maps are not injective for all $\lambda$ - the dual has no torsionfree quotient arising as a dual of a submodule of $M_0/(M_0\cap V_{n!-1})$, thus $(M_0/(M_0\cap V_{n!-1}))^*\leq\overline{D(V)-tors}$. Now consider the exact sequence
\[0\to M_0\cap V_{n!-1}\to M_0\to M_0/(M_0\cap V_{n!-1})\to 0.\]

We claim that $F$ factors through $M_{0,n!-1}^*$ as well. If $f\in(M_0\cap V_{n!-1})^*$ such that $f|_{M_{0,n!-1}}\equiv 0$, then $\psi_t(u^{-1}f)|_{t^{-1}M_{0,n!-1}}\equiv 0$ for all $t\in T_+$ and $u\in N_0$:

The $\psi$-action on $D(V)$ comes from the $T_+$-action on $V$, hence \break $\psi_t(u^{-1}f)(t^{-1}x)=(u^{-1}f)(tt^{-1}x)=f(ux)=0$ if $x\in M_{0,n!-1}$.

For all $O\subseteq D$ open subset there exists $t\in T_+$ such that \break $\mathrm{Ker}(f\mapsto f|_{t^{-1}M_{0,n!-1}})\subset F^{-1}(O)$, since $F$ is continuous and \break $\bigcup_{t\in T_+}t^{-1}M_{0,n!-1}=V_{0,n!-1}$. If $O$ is $\varphi$ and $N_0$-invariant as well, then
\[F(f)=\sum_{u\in N_0/tN_0t^{-1}}u\varphi_t(F(\psi_t(u^{-1}f))\subseteq O.\]
Then $F(f)=0$ by the Hausdorff property.

By \cite{SchVi}, Proposition 12.1, we have $D(V_{n!-1})=M_{0,n!-1}^*$, which completes the proof. 
\end{proof}

\begin{remarks}
\begin{enumerate}
\item For this we do not need the $\Gamma$-action of $D$, the statement is true for $D$ \'etale $\varphi$-modules with continuous $N_0$ and $\varphi$-action.
\item Let $D'$ be the maximal quotient of $D(V)$, which is torsionfree, Haussdorff and on which the action of $\psi$ is nondegenerate in the following sense: for all $d\in D'\setminus\{0\}$ and $t\in T_+$ there exists $u\in N_0$ such that $\psi_t(ud)\neq0$. Then the natural map from $D'$ to $D(V_{n!-1})$ is bijective.
\item By \cite{Za} section 4 if $F=\mathbb{Q}_p$, we have that $D^0(V_{n!-1})=D(V_{n!-1})$ and $D^i(V_{n!-1})=0$ for $i>0$.
\end{enumerate}
\end{remarks}

Following \cite{SchVi} we choose a surjective homomorphism \break $\ell:N_0\to\mathbb{Q}_p$. Then we can get $(\varphi,\Gamma)$-modules from $D(V)$: Let $\Lambda_\ell(N_0)$ denote the ring $\Lambda_{N_1}(N_0)$ of \cite{SchVi} with $N_1=\mathrm{Ker}(\ell)$, with maximal ideal $\mathcal{M}_\ell(N_0)$, $\Omega_\ell(N_0)=\Lambda_\ell(N_0)/\pi_K\Lambda_\ell(N_0)$. The ring $\Lambda(N_0)$ can be viewed as the ring $\Lambda(N_1)[[X]]$ of skew Taylor series over $\Lambda(N_1)$ in the variable $X = [u]-1$ where $u\in N_0$ and $(u)$ is a topological generator of $\ell(N_0)=\mathbb{Z}_p$. Then $\Lambda_\ell(N_0)$ is viewed as the ring of infinite skew Laurent series $n\in\mathbb{Z}a_nX^n$ over $\Lambda(N_1)$ in the variable $X$ with $\lim_{n\to-\infty}a_n=0$ for the compact topology of $\Lambda(N_1)$.

Let $D_\ell(V)=\Lambda_\ell(N_0)\otimes_{\Lambda(N_0)}D(V)$.

\begin{corollary}
Let $D$ be a finitely generated topologically \'etale $(\varphi,\Gamma)$-module over $\Omega_\ell(N_0)$, and $F':D_\ell(V)\to D$ a continuous map. Then $F'$ factors through the natural map $F'_0:D_\ell(V)\to D_\ell(V_{n!-1})$.
\end{corollary}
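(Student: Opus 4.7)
The plan is to reduce the corollary to Theorem \ref{mainthm} by pulling $F'$ back along the canonical map $\iota:D(V)\to D_\ell(V)$, $x\mapsto 1\otimes x$. Setting $F:=F'\circ\iota$, I obtain a continuous $\psi$-equivariant $\Lambda(N_0)$-linear map $D(V)\to D$, where $D$ is viewed as an $\Omega(N_0)$-module via the inclusion $\Omega(N_0)\hookrightarrow\Omega_\ell(N_0)$. The hope is that Theorem \ref{mainthm} then applies to $F$ directly.

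Next, I would check that $D$ satisfies the three hypotheses of Theorem \ref{mainthm} as an $\Omega(N_0)$-module. Torsion-freeness over $\Omega(N_0)$ is inherited from torsion-freeness over $\Omega_\ell(N_0)$ (a finitely generated topologically \'etale module over a ring without zero-divisors), since any $\Omega(N_0)$-torsion element would be $\Omega_\ell(N_0)$-torsion. The Hausdorff property and the existence of a basis of $\varphi$-invariant open $\Omega(N_0)$-submodules near $0$ are standard features of the weak topology on finitely generated topologically \'etale $(\varphi,\Gamma)$-modules: concretely, the submodules $\mathcal{M}_\ell(N_0)^k\cdot D$ (or variants built from the maximal ideal of $\Lambda(N_1)$, with $N_1=\ker\ell$) furnish such a neighborhood basis, and they are stable under $\varphi_t$ for $t\in T_+$ because the $\varphi_t$ preserve the relevant filtration.

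With the hypotheses in place, Theorem \ref{mainthm} yields a continuous $\psi$-equivariant $\Lambda(N_0)$-linear map $G:D(V_{n!-1})\to D$ with $F=G\circ F_0$, where $F_0:D(V)\to D(V_{n!-1})$ is the canonical projection. Since $D$ carries an $\Omega_\ell(N_0)$-module structure and $G$ is $\Omega(N_0)$-linear, $G$ extends via the universal property of tensor product to an $\Omega_\ell(N_0)$-linear map
\[G_\ell:D_\ell(V_{n!-1})=\Lambda_\ell(N_0)\otimes_{\Lambda(N_0)}D(V_{n!-1})\longrightarrow D,\qquad\lambda\otimes d\mapsto\lambda\cdot G(d),\]
and continuity of $G_\ell$ follows from finite generation of $D$ over $\Omega_\ell(N_0)$ and compatibility of the weak topologies. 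The desired identity $F'=G_\ell\circ F'_0$ then holds because both sides are $\Lambda_\ell(N_0)$-linear and agree on the image of $\iota$, which generates $D_\ell(V)$ as a $\Lambda_\ell(N_0)$-module.

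I expect the main obstacle to be verifying the third hypothesis of Theorem \ref{mainthm} for $D$: the topology and invariant substructures of $D$ are naturally described relative to $\Omega_\ell(N_0)$, whereas the theorem requires $\varphi$-stable open $\Omega(N_0)$-submodules. One must therefore check carefully that the natural neighborhood basis --- built from $\Lambda(N_1)$ and powers of its maximal ideal together with the skew Laurent structure --- really does consist of $\Omega(N_0)$-submodules preserved by every $\varphi_t$. A secondary, more technical point is the continuity of $G_\ell$; this should be automatic from the weak topology, but needs a brief verification.
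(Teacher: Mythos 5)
Your overall route is the same as the paper's: pull $F'$ back to $F=F'\circ\iota$ on $D(V)$, check that a finitely generated topologically \'etale $(\varphi,\Gamma)$-module over $\Omega_\ell(N_0)$ satisfies the three hypotheses of Theorem \ref{mainthm} as an $\Omega(N_0)$-module, apply the theorem, and then use the $\Omega_\ell(N_0)$-structure of $D$ to factor through $D_\ell(V_{n!-1})$. The torsion-freeness and Hausdorff points are handled exactly as in the paper (there via \cite{SVZ} Theorem 8.20 and \cite{SchVi} Lemma 8.2.iii), and your final extension step $G\rightsquigarrow G_\ell$ with agreement on the image of $\iota$ is the same (implicit) last step the paper uses.

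The gap is the third hypothesis, which is the only substantive content of this proof, and your candidate basis does not work. The submodules $\mathcal{M}_\ell(N_0)^k D$ are $\mathcal{M}_\ell$-adically defined and are in general \emph{not} open for the weak topology: in the extreme case $N_1=\ker\ell=\{1\}$ (so $N_0\cong\mathbb{Z}_p$) one has $\mathcal{M}_\ell(N_0)=\pi_K\Lambda_\ell(N_0)$, hence $\mathcal{M}_\ell(N_0)^kD=0$, which is not open since the weak topology on a nonzero $D$ is not discrete; for larger $N_0$ the same problem appears in the ``$X$-direction'', because $X=[u]-1$ is a unit in $\Omega_\ell(N_0)$, so no power of the maximal ideal contains the $X$-adic tails that every basic weak-open neighborhood must contain. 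The paper repairs exactly this by taking the sets
\[\mathcal{M}_\ell(N_0)^kD+\Omega(N_0)\otimes_{k[[X]]}X^n\ell(D)^{++},\]
where $\ell(D)$ is the classical \'etale $(\varphi,\Gamma)$-module attached to $D$ by the category equivalence of \cite{SVZ} Theorem 8.20 and $\ell(D)^{++}$ its canonical $\varphi$-stable lattice; one then checks these are open $\Omega(N_0)$-submodules, stable under all $\varphi_t$, $t\in T_+$, and cofinal among neighborhoods of $0$ in the weak topology. You explicitly flag this verification as ``the main obstacle'' and leave it unresolved, with a first candidate that fails, so as it stands the proposal does not establish the corollary; supplying the displayed neighborhood basis and its $\varphi$-invariance closes the gap and brings your argument in line with the paper's.
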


\begin{proof}
If $D$ is a finitely generated topologically \'etale $(\varphi,\Gamma)$-module over $\Omega_\ell(N_0)$, then it automatically satisfies the conditions above:

$D$ is \'etale, hence $\Omega_\ell(N_0)$-torsion free (Theorem 8.20 in \cite{SVZ}), thus $\Omega(N_0)$-torsion free as well. It is Hausdorff, since finitely generated and the weak topology is Haussdorff on $\Omega_\ell(N_0)$ (Lemma 8.2.iii in \cite{SchVi}).

Finally we need to verify the condition for the neighborhoods. The sets $\mathcal{M}_\ell(N_0)^kD+\Omega(N_0)\otimes_{k[[X]]}X^n\ell(D)^{++}$ (where $\ell(D)$ is the \'etale $(\varphi,\Gamma)$-module attached to $D$ at the category equivalence \cite{SVZ} Theorem 8.20) are open $\varphi$-invariant $\Omega(N_0)$ submodules and form a basis of neighborhoods of 0 in the weak topology of $D$.

Thus $D(V)\to D_\ell(V)\to D$ factors through $D(V)\to D(V_{n!-1})$, hence the corollary.
\end{proof}

\section{Some properties of $M_0$}

In this section we point out some properties of $M_0$, which make the picture more difficult than the known case of subqoutients $V_{m-1}/V_m$. Recall (\cite{SchVi} section 12) that $V_{m-1}/V_m\simeq V(w_m,\chi)$, which has a minimal generating $B_+$-subrepresentation
\[M(w_m,\chi)=C^\infty(N_0/N'_{w_m}\cap N_0)\in\mathcal{B}_+(V(w_m,\chi)).\]

\begin{proposition}
Let $n=3$, $F=\mathbb{Q}_p$, then $M_0\cap V_{n!-1}\supsetneq M_{0,n!-1}$.
\end{proposition}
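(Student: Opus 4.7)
The plan is to exhibit an explicit element $f\in M_0\cap V_{n!-1}$ outside $M_{0,n!-1}=B_+f_{w_0}$. The key observation is that for $w\prec w_0$ in $W=S_3$ and $t\in T_+$, the support $tU^{(1)}wB$ of $tf_w$ extends past $BwB$ into higher Bruhat cells, because $tU^{(1)}t^{-1}\not\subset U^{(1)}$. Consequently, individual terms $n_itf_{w_i}\in M_0$ with $w_i\neq w_0$ may contribute nontrivially to the big cell $Bw_0B$, and suitable combinations cancel on all smaller cells while leaving a residual in $Bw_0B$ genuinely richer than what $B_+f_{w_0}$ produces.

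Concretely, I would begin with $w=s_1$ and $t'=\mathrm{diag}(\pi,1,1)\in T_+$. By Lemma~\ref{mingenlemma},
\[\chi(s_1^{-1}t's_1)f_{s_1}-g_{s_1}=h_{s_1s_2}+h_{s_2s_1}+h_{w_0},\]
with $g_{s_1}=\sum_{m\in\Theta_{s_1,1}}mt'f_{s_1}\in M_0$ and $h_{w'}\in N_0f_{w'}$. Applying Lemma~\ref{mingenlemma} once more to each $f_{w'}$ for $w'\in\{s_1s_2,s_2s_1\}$ (the only Weyl elements strictly between $s_1$ and $w_0$) yields identities $\chi(\ldots)f_{w'}-g_{w'}\in N_0f_{w_0}$. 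Assembling these identities to cancel the $U_{s_1s_2}$- and $U_{s_2s_1}$-components of $h$ produces an element $f\in M_0$ supported entirely on $U_{w_0}=Bw_0B$, so $f\in M_0\cap V_{n!-1}$.

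To verify $f\notin M_{0,n!-1}$, I would use the isomorphism $V_{n!-1}\simeq C^\infty_c(N)$ given by $h\mapsto h(\cdot w_0)$; under this, $M_{0,n!-1}$ maps onto $M(w_0,\chi)=C^\infty(N_0)$, the subspace of functions supported in $N_0$. The goal is to show that the image of $f$ has support intersecting $N\setminus N_0$: the contributions of the $tf_w$-terms (with $w\neq w_0$) to $Bw_0B$ correspond, after rewriting $uw=n_0^{-1}\tilde{u}w_0b_0$ via Bruhat decomposition for suitable $u\in U^{(1)}$, to functions on cosets $n_0N_0$ with $n_0\notin N_0$.

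The main obstacle is this last step: tracking explicitly how the iterated $B_+$-corrections ``protrude'' into the big cell under the $C^\infty_c(N)$-identification and verifying the combined contribution lands outside $N_0$. This requires delicate matrix computations in $\mathrm{GL}_3(\mathbb{Q}_p)$, as one must ensure the combinations produced by Lemma~\ref{mingenlemma} do not accidentally stay within $N_0$ after identification. The phenomenon is genuinely specific to $n\ge 3$: in $\mathrm{GL}_2$ there is no intermediate Weyl element available between $\mathrm{id}$ and $w_0$, and the analogous construction collapses back into $B_+f_{w_0}$.
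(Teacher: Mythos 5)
Your plan cannot produce a witness, because every element it builds already lies in $M_{0,n!-1}$. The error terms in Lemma \ref{mingenlemma} are $N_0$-translates of the higher $f_{w'}$: for $w'$ of length $2$ the lemma gives $\chi(w'^{-1}t'w')f_{w'}-g_{w'}\in k_K[N_0]f_{w_0}$, and for $s_1$ it gives an element of $\sum_{w''\succ s_1}k_K[N_0]f_{w''}$. So write $\chi(s_1^{-1}t's_1)f_{s_1}-g_{s_1}=h_4+h_5+h_6$ with $h_i\in k_K[N_0]f_{w_i}$ and $h_i=\sum_k\lambda_{ik}n_{ik}f_{w_i}$; whatever you subtract to kill the components on $U_{w_4},U_{w_5}$ (the $h_i$ themselves, or $N_0$-combinations of the $g_{w_i}$ as in your assembly), the leftover is $h_6+\sum_{i=4,5}\chi(w_i^{-1}t'w_i)^{-1}\sum_k\lambda_{ik}n_{ik}\bigl(\chi(w_i^{-1}t'w_i)f_{w_i}-g_{w_i}\bigr)$, which lies in $k_K[N_0]f_{w_0}\subseteq B_+f_{w_0}=M_{0,n!-1}$. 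Hence your $f$ is in $M_{0,n!-1}$ by construction, and the step you flag as ``the main obstacle'' is not delicate but impossible: since $ntN_0w_0B\subseteq N_0w_0B$ for $nt\in B_+$, every element of $M_{0,n!-1}$ is supported in $N_0w_0B$, so its image in $C^\infty_c(N)$ never leaves $N_0$. A related slip is the identification $U_{w_0}=Bw_0B$: in fact $U_{w_0}=R_{w_0}B=N_0w_0B\subsetneq Bw_0B$, and an element supported in $U_{w_0}$ is exactly of the harmless kind; a genuine witness must vanish on $G_{n!-1}$ yet be nonzero at some point of $Bw_0B\setminus N_0w_0B$, i.e.\ at a point of some $U_w\cap Bw_0B$ with $w\prec w_0$.

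The paper's proof achieves this by a mechanism your outline does not use. It sets $h=\sum_{a,b=0}^{p^2-1}n_{a,b}\,\mathrm{diag}(p^2,1,1)f_{w_2}$, summing over a \emph{full} set of residues modulo $p^2$ rather than over the $U^{(1)}$-representatives $\Theta_{w,j_0}$ of Lemma \ref{mingenlemma}; the hypothesis that $\chi_1/\chi_2$ and $\chi_2/\chi_3$ are nontrivial on $o_F^*$ --- which your argument never invokes, though the statement's proof genuinely needs a case distinction on $\chi$ --- makes the resulting character sums annihilate the components on the lower cells, while the $T_+$-translate $\mathrm{diag}(p^2,1,1)f_{w_2}$ does protrude into $Bw_0B\setminus N_0w_0B$. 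The remaining component on $U_{w_5}$ is cancelled by subtracting $N_0$-translates of $f_{w_5}$ weighted by values of $h$, and non-membership in $M_{0,5}$ is certified by evaluating at the explicit point $z\in R_{w_5}\cap Bw_0B$ with $f(z)\neq0$. To salvage your approach you would have to replace Lemma \ref{mingenlemma} by a cancellation scheme whose big-cell residue is \emph{not} an $N_0$-combination of $f_{w_0}$ --- which is precisely what the paper's explicit $h$ accomplishes. (Your closing remark about $\mathrm{GL}_2$ is correct, but for the reason recorded in the last proposition of the paper, not because your construction would succeed for $n\geq3$.)
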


\begin{corollary}\label{M0tulnagy}
Thus $M_0\cap V_{n!-1}$ is not equal to the minimal generating $B_+$-subrepresentation of $V_{n!-1}$, which is $C^\infty(N_0)=M_{0,n!-1}$ (\cite{SchVi} section 12).
\end{corollary}

\begin{proof}
Assume that $\chi=\chi_1\otimes\chi_2\otimes\chi_3:T\to k_K^*$ is a character, such that neither $\chi_1/\chi_2$, nor $\chi_2/\chi_3$ is trivial on $o_K^*$. Similar construction can be carried out in the other cases.

Let $\prec_T$ be the following total ordering of the Weyl group of $G$ refining the Bruhat ordering:
\[w_1=\left(\begin{array}{ccc}1&0&0\\0&1&0\\0&0&1\end{array}\right)\prec_T w_2=\left(\begin{array}{ccc}0&1&0\\1&0&0\\0&0&1\end{array}\right)\prec_T w_3=\left(\begin{array}{ccc}1&0&0\\0&0&1\\0&1&0\end{array}\right)\prec_T\]
\[\prec_T w_4=\left(\begin{array}{ccc}0&1&0\\0&0&1\\1&0&0\end{array}\right)\prec_T w_5=\left(\begin{array}{ccc}0&0&1\\1&0&0\\0&1&0\end{array}\right)\prec_T w_6=\left(\begin{array}{ccc}0&0&1\\0&1&0\\1&0&0\end{array}\right)=w_0.\]

And let
\[h=\sum_{a=0}^{p^{2}-1}\sum_{b=0}^{p^2-1}\left(\begin{array}{ccc}1&a&b\\0&1&0\\0&0&1\end{array}\right)\left(\begin{array}{ccc}p^2&0&0\\0&1&0\\0&0&1\end{array}\right)f_{w_2}\in M_0,\]
\[f=h-\frac{1}{\chi_3(p^2)}\sum_{a=0}^{p^3-1}\sum_{b=0}^{p^3-1}h\Bigg(\left(\begin{array}{ccc} a&b&1 \\ 1&0&0 \\ 0&1&0\end{array}\right)\Bigg)\left(\begin{array}{ccc}1&a&b\\0&1&0\\0&0&1\end{array}\right)f_{w_5}.\]

Then it is easy to verify that $f\in M_0\cap V_{5}$, and that $f(z)\neq 0$ for
\[z=\left(\begin{array}{ccc}p^2&0&1\\1&0&0\\p&1&0\end{array}\right)\in Bw_0B\setminus N_0w_0B.\]
Thus $f\notin M_{0,5}=B_+f_6\subseteq\{f\in V|\mathrm{supp}(f)\leq N_0w_0B\}$.
\end{proof}

However, if $f\in M_0\cap V_{5}$ then $\mathrm{supp}(f)$ is contained in $Bw_0B\cap\bigcup_{i>3}R_iB$: A straightforward computation shows that for any $n\in N_0$, $t\in T_+$, $w\in W$ and
\begin{itemize}
\item for any $r\in R_{w_1}$ we have $ntf_w(r)=ntf_w(w_1)$. Let $r'=w_1\in G_5$,
\item for any $r\in R_{w_2}$ we have $ntf_w(r)=ntf_w(r')$ for
\[r'=\left(\begin{array}{ccc} \alpha & 1 & 0 \\ 1 & 0 & 0 \\ \beta' & 0 & 1 \end{array}\right)\in G_5, \mathrm{~where~} r=\left(\begin{array}{ccc} \alpha & 1 & 0 \\ 1 & 0 & 0 \\ \beta' & \gamma' & 1 \end{array}\right),\]
\item for any $r\in R_{w_3}$ we have $ntf_w(r)=ntf_w(r')$ for
\[r'=\left(\begin{array}{ccc} 1 & 0 & 0 \\ \alpha'-\beta\gamma' & \gamma & 1 \\ 0 & 1 & 0 \end{array}\right)\in G_5, \mathrm{~where~} r=\left(\begin{array}{ccc} 1 & 0 & 0 \\ \alpha' & \gamma & 1 \\ \beta' & 1 & 0 \end{array}\right).\]
\end{itemize}
Thus if $i<4$ and $r\in R_{w_i}$, then since $r'\notin Bw_0B$ we have $f(r)=f(r')=0$. 

\begin{proposition}
The quotients $M_{0,m-1}/M_{0,m-1}\cap V_m$ via $f\mapsto f(\cdot w_m)$ are isomorphic to $M(w_m,\chi)$.
\end{proposition}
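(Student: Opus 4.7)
The plan is to use the given isomorphism $\Phi\colon V_{m-1}/V_m\xrightarrow{\sim}V(w_m,\chi)$, $f\mapsto f(\cdot w_m)$, restrict it to $M_{0,m-1}$, and identify the image as $M(w_m,\chi)$. Proposition~\ref{rendezes} and its Corollary show that each $B_+f_{w_{m'}}$ for $m'\geq m$ is supported inside $\bigcup_{w\succeq w_{m'}}U_w\subseteq G\setminus G_{m'-1}\subseteq G\setminus G_{m-1}$, so $M_{0,m-1}\subseteq V_{m-1}$ and the restricted map $M_{0,m-1}/(M_{0,m-1}\cap V_m)\hookrightarrow V(w_m,\chi)$ is injective. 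The same support argument gives $B_+f_{w_{m'}}\subseteq V_{m'-1}\subseteq V_m\subseteq\ker\Phi$ for $m'>m$, so $\Phi(M_{0,m-1})=\Phi(B_+f_{w_m})$.

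A routine check (pulling $w_m^{-1}t^{-1}w_m\in T\subseteq B$ out of $t^{-1}n^{-1}yw_m=(t^{-1}n^{-1}yt)w_m(w_m^{-1}t^{-1}w_m)$ and invoking the right $B$-equivariance of $f_{w_m}$) shows $\Phi$ intertwines the $B_+$-action on $M_{0,m-1}$ with the twisted action $(t\phi)(n)=\chi(w_m^{-1}tw_m)\phi(t^{-1}nt)$ on $V(w_m,\chi)$ recorded in Section~2. Hence $\Phi(B_+f_{w_m})=B_+\cdot\Phi(f_{w_m})$, and everything reduces to computing $\Phi(f_{w_m})$ and identifying its $B_+$-orbit.

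I would compute $\Phi(f_{w_m})$ via the Bruhat decomposition $N=N_{w_m}N'_{w_m}$: writing $n=n_1n_2$ one has $nw_m=n_1w_m(w_m^{-1}n_2w_m)$ with $w_m^{-1}n_2w_m\in N\subseteq B$; since $\chi|_N=1$, the definition of $f_{w_m}$ gives $f_{w_m}(nw_m)=1$ if $n_1\in N_{w_m}\cap U^{(1)}$ and $0$ otherwise. Thus $\Phi(f_{w_m})$ is the characteristic function of $N_{w_m}\cap U^{(1)}$ sitting inside $N_0/(N'_{w_m}\cap N_0)\simeq N_{0,w_m}$, and in particular lies in $M(w_m,\chi)$. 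The containment $\Phi(B_+f_{w_m})\subseteq M(w_m,\chi)$ follows because $N_0$ preserves $N_{0,w_m}$ under translation on $N/N'_{w_m}$ and $T_+$ stabilizes $N_{0,w_m}$ under conjugation.

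The remaining surjectivity onto $M(w_m,\chi)$ is the main obstacle. I would establish it by showing that the compact open subgroups $t(N_{w_m}\cap U^{(1)})t^{-1}$, $t\in T_+$, form a neighborhood basis of the identity in $N_{0,w_m}$: taking $t=\mathrm{diag}(\pi_F^{a_1},\ldots,\pi_F^{a_n})\in T_+$ scales the $(i,j)$-entry by $\pi_F^{a_i-a_j}$, and since the nonzero positions of $N_{w_m}$ satisfy $i<j$ the differences $a_i-a_j$ can be made arbitrarily large while preserving the $T_+$ condition. The $N_0$-translates of such a subgroup partition $N_{0,w_m}$ into finitely many cosets whose characteristic functions equal $\chi(w_m^{-1}tw_m)^{-1}\cdot nt\Phi(f_{w_m})$ up to scalars; a standard refinement argument then shows that finite $k_K$-linear combinations exhaust $C^\infty(N_{0,w_m})=M(w_m,\chi)$, completing the identification.
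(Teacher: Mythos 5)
Your proof is correct and reaches the same two milestones as the paper --- injectivity from the support estimates of Proposition \ref{rendezes}, surjectivity by exhibiting enough $B_+$-translates of $f_{w_m}$ --- but the surjectivity step is implemented differently. The paper never computes the image of $f_{w_m}$ itself: it forms the averaged elements $\sum_{n\in(N_0\cap U^{(l)})/t_0^lN_0t_0^{-l}}nt_0^lf_{w_m}$ with $t_0=\mathrm{diag}(\pi_F^{n-1},\dots,\pi_F,1)$, evaluates them on $R_{w_m}B$, and notes that their images are indicators of the images of $N_0\cap U^{(l)}$, whose $N_0$-translates generate $M(w_m,\chi)$; since $N_0\cap U^{(l)}$ is normal in $N_0$, these translates are unproblematic cosets. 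You instead compute $\Phi(f_{w_m})$ directly as the indicator of $N_{w_m}\cap U^{(1)}$ and use single elements $ntf_{w_m}$, whose images are indicators of cosets of the shrinking subgroups $t(N_{w_m}\cap U^{(1)})t^{-1}$ --- this avoids the averaging and the evaluation on all of $R_{w_m}B$, at the price of two verifications you currently gloss over. First, the vanishing half of your formula for $\Phi(f_{w_m})$ needs a short argument: if $n_1\in N_{w_m}$ and $n_1w_m\in U^{(1)}w_mB$, write $n_1=u\,(w_mbw_m^{-1})$ and use $U^{(1)}=w_mU^{(1)}w_m^{-1}\subseteq(w_mN^-w_m^{-1}\cap U^{(1)})\,w_mBw_m^{-1}$ together with $w_mN^-w_m^{-1}\cap w_mBw_m^{-1}=\{\mathrm{id}\}$ to conclude $n_1\in U^{(1)}$; this is exactly where the standing hypothesis $U^{(1)}\subset(N^-\cap U^{(1)})B$ enters. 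Second, in the refinement step translate only by $n\in N_{0,w_m}$: under the identification $N_0/(N'_{w_m}\cap N_0)\simeq N_{0,w_m}$ these are genuine left cosets of the open subgroup $t(N_{w_m}\cap U^{(1)})t^{-1}$ and do partition $N_{0,w_m}$, whereas for general $n\in N_0$ the image of $n\,t(N_{w_m}\cap U^{(1)})t^{-1}$ in $N/N'_{w_m}$ need not be a coset because the $N'_{w_m}$-component mixes in; since $N_0=N_{0,w_m}(N'_{w_m}\cap N_0)$, this restriction costs nothing. With these two points made explicit your argument is complete and is arguably more economical than the paper's, whose averaging device was chosen precisely to sidestep both issues.
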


\begin{proof}
It is obvious, that $f(\cdot w_m)\equiv0$ implies $f|_{G_m\setminus G_{m-1}}\equiv 0$ and \break $f\in M_{0,m-1}\cap V_m$. Hence the map $M_{0,m-1}/M_{0,m-1}\cap V_m\to M(w_m,\chi)$, \break $f\mapsto f(\cdot w_m)$ is injective.

Let $t_0=\mathrm{diag}(\pi_F^{n-1},\pi_F^{n-2},\dots,\pi_F,1)\in T_+$, and for any $l\in\mathbb{N}$ let \break $U^{(l)}=\mathrm{Ker}(G_0\to G(o_F/\pi_F^lo_F))$. For $x=rb\in R_{w_m}B$ we have
\[\sum_{n\in (N_0\cap U^{(l)})/t_0^lN_0t_0^{-l}}nt_0^lf_{w_m}(rb)=\left\{\begin{array}{ll} \chi^{-1}(b), & \mathrm{~if~}r\in U^{(l)}w_m,\\ 0, & \mathrm{~if~not.}\end{array}\right.\]
The image of these generate $M(w_m,\chi)$ as an $N_0$-module, so $f\mapsto f(\cdot w_m)$ is surjective.
\end{proof}

Since $M_{0,m}\leq V_m$, $M(w_m,\chi)$ is naturally a quotient of $M_{0,m-1}/M_{0,m}$, we have $D(V_{m-1}/V_m)\leq(M_{0,m-1}/M_{0,m})^*$.

\begin{proposition}
For $m=1$ and $m=n!-n+1,n!-n+2,\dots,n!$ \break $(M_{0,m-1}/M_{0,m})^*=D(V_{m-1}/V_m)$. For other $m$-s it is not true, for example if $n=3$, $F=\mathbb{Q}_p$ and $m=2,3$.
\end{proposition}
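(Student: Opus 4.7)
The plan is to recast the claim through the exact sequence
\[ 0 \to (M_{0,m-1}\cap V_m)/M_{0,m} \to M_{0,m-1}/M_{0,m} \to M(w_m,\chi) \to 0, \]
where the rightmost map is obtained by composing the natural quotient $M_{0,m-1}/M_{0,m} \twoheadrightarrow M_{0,m-1}/(M_{0,m-1}\cap V_m)$ with the isomorphism from the preceding proposition; by Pontrjagin duality the stated equality is equivalent to $M_{0,m-1}\cap V_m = M_{0,m}$. So the task reduces to deciding for which $m$ this holds. The endpoint $m=n!$ is immediate since $V_{n!}=0$ and $M_{0,n!}=0$.

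For $m=1$, I would first verify that $f_{w_1}$ is actually $N_0$-invariant: the support $U^{(1)}B$ is left-$N_0$-stable by an LDU computation (for $n\in N_0$ and $u^-\in U^{(1)}\cap N^-$ the $N^-$-factor of the decomposition $n u^- = u' b'$ remains in $U^{(1)}\cap N^-$, because the diagonal pivots lie in $1+\pi_Fo_F$ so the below-diagonal entries produced by reduction stay in $\pi_Fo_F$), and the diagonal of the resulting $B$-factor lies in $1+\pi_Fo_F$, on which $\chi$ is trivial, so the value of $\chi^{-1}$ is preserved. Specializing Lemma \ref{mingenlemma} to $w=w_1=\mathrm{id}$, where $N_{w_1,j_0}=\{e\}$ so $\Theta_{w_1,j_0}=\{e\}$ and $g=t'f_{w_1}$, yields $t'f_{w_1}\equiv\chi(t')f_{w_1}\pmod{M_{0,1}}$. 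Combined with the $N_0$-invariance this forces $M_{0,0}/M_{0,1}$ to be the one-dimensional $k_K$-module with trivial $N_0$-action and $T_+$-action by $\chi$, matching $M(w_1,\chi)=k_K$.

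For the range $m\in\{n!-n+1,\ldots,n!-1\}$, each $w_m=w_0s_i$ is covered by $w_0$ in strong Bruhat order, so $\{w:w\succeq w_m\}=\{w_m,w_0\}$; by Proposition \ref{rendezes} the support of any $g\in B_+f_{w_m}$ lies in $U_{w_m}\cup U_{w_0}\subseteq Bw_mB\cup Bw_0B$. For $g\in V_m$ the condition $g|_{Bw_mB}=0$ collapses the support to $(U_{w_m}\cap Bw_0B)\cup U_{w_0}$, and the plan is to run an orbit-by-orbit argument in the spirit of the proof of Lemma \ref{mingenlemma} showing that, for $w_m$ covered by $w_0$, the values of any element of $B_+f_{w_m}$ on $U_{w_m}\cap Bw_mB$ are tied to its values on $U_{w_m}\cap Bw_0B$ via a single $B_+$-orbit relation, so vanishing on the former forces vanishing on the latter. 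The element is then supported on $U_{w_0}$ alone, which by the previous proposition places it in $\mathrm{Ind}^{U_{w_0}}_B(\chi)=M_{0,n!-1}\subseteq M_{0,m}$.

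For the negative direction with $n=3$, $F=\mathbb{Q}_p$, and $m\in\{2,3\}$, I would mirror the explicit construction that appeared in the proof of Corollary \ref{M0tulnagy}: choose $h\in B_+f_{w_m}$ of an explicit sum form, then subtract a correction assembled from $B_+f_{w_\ell}$-pieces with $\ell>m$ designed to kill the $G_m$-restriction of $h$, producing $f=h-(\text{correction})\in M_{0,m-1}\cap V_m$ that still has a non-trivial $f_{w_m}$-contribution modulo $M_{0,m}$, hence $f\notin M_{0,m}$. The hardest step is the orbit-linkage argument in the intermediate range — showing that for $w_m$ covered by $w_0$ the value of a $B_+f_{w_m}$-element on $U_{w_m}\cap Bw_mB$ determines its value on $U_{w_m}\cap Bw_0B$. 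This uses the special position of $w_m$ just below $w_0$; the failure of the corresponding linkage for middle-height $w_m$ is precisely what the $n=3$, $m\in\{2,3\}$ counterexamples exploit.
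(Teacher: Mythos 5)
Your strategy coincides with the paper's: reduce, via the isomorphism $M_{0,m-1}/(M_{0,m-1}\cap V_m)\simeq M(w_m,\chi)$ and $D(V_{m-1}/V_m)=M(w_m,\chi)^*$, to the equality $M_{0,m-1}\cap V_m=M_{0,m}$; settle $m=1$ by showing $M_0/M_{0,1}$ is at most one-dimensional; settle the top range by a support argument inside $U_{w_m}\cup U_{w_0}$; and obtain the negative cases from the explicit function constructed for Corollary \ref{M0tulnagy}. Your $m=1$ paragraph is correct and in fact more detailed than the printed one; only add that the generators of $T_+$ lying in $T\cap G_0$ and in the centre act on $f_{w_1}$ exactly through $\chi$, since Lemma \ref{mingenlemma} covers only the elements $\mathrm{diag}(\pi_F,\dots,\pi_F,1,\dots,1)$.

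The genuine gap is the step you yourself single out. For $m=n!-n+1,\dots,n!-1$ the entire content of the proposition, after the reduction, is the claim that an element of $B_+f_{w_m}$ vanishing on $Bw_mB$ also vanishes on $U_{w_m}\cap Bw_0B$, hence is supported in $U_{w_0}=N_0w_0B$ and therefore lies in $M_{0,n!-1}\subseteq M_{0,m}$; you only announce ``the plan is to run an orbit-by-orbit argument in the spirit of Lemma \ref{mingenlemma}'' and never carry it out, so the proposal merely restates what has to be proved. What is required is an explicit description of $ntf_{w_m}|_{R_{w_m}}$ for arbitrary $n\in N_0$, $t\in T_+$ (analogous to the claim inside the proof of Lemma \ref{mingenlemma}, which treats $n=\mathrm{id}$ and the special $t'$): one must show that its support is a congruence subset of $R_{w_m}$ in which the free coordinates separating $Bw_0B$ from $Bw_mB$ inside $U_{w_m}$ are confined to disks around $0$, with values not depending on them, so that vanishing on the slice $U_{w_m}\cap Bw_mB$ propagates to all of $U_{w_m}$. (The printed proof is admittedly also laconic here, asserting the support containment outright; note too that both you and the paper implicitly assume every $w_m$ with $n!-n<m<n!$ is covered by $w_0$, which for $n\geq4$ depends on the chosen refinement $\prec_T$.) In the negative cases your reason that $f\notin M_{0,m}$ --- that $f$ ``still has a non-trivial $f_{w_m}$-contribution modulo $M_{0,m}$'' --- is circular: by Proposition \ref{rendezes} every element of $M_{0,m}$ vanishes on $\bigcup_{l\leq m}U_{w_l}$, so the honest certificate is a point of $\bigcup_{l\leq m}U_{w_l}$ (necessarily in $Bw_0B$, since $f\in V_5$) where $f$ is non-zero; the witness $z$ used in Corollary \ref{M0tulnagy} lies in $R_{w_5}B$, so it does not serve this purpose, and a further evaluation is needed here.
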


\begin{proof}
By the previous proposition it is enough to show that \break $M_{0,m}=M_{0,m-1}\cap V_m$ for $m=1$ and $m>n!-n$.

For $m=1$ the quotient is obviously $k_K$, for $m>n!-n$ we have \break $w\prec w_m$ implies $w=w_{n!}$, so if $f\in B_+f_{w_m}\cap V_{m-1}=B_+f_{w_m}\cap V_{n!-1}$, then \break $\mathrm{supp}(f)\subset U^{(1)}R_{w_{n!-1}}^{(1)}B$. But
\[M_{0,n!-1}\simeq C^\infty(N_0)\simeq\{f\in V_{n!-1}|\mathrm{supp}(f)\subset U^{(1)}R_{w_{n!-1}}B\}.\]

The fuction $f$ constructed in the beginning of this section is in \break $M_{0,1}\cap V_2\setminus M_{0,2}$. The same can be done for $m=3$.
\end{proof}

\newpage

\addcontentsline{toc}{section}{Bibliography}

\end {document}